\newcounter{foo}
\def\Fq{{\mathbb{F}_q}}
\newcommand{\PG}{\mathrm{PG}}
\newcommand{\cG}{{\cal G}}
\newcommand{\cL}{{\cal L}}
\newcommand{\cM}{{\cal M}}
\newcommand{\cP}{{\cal P}}
\newcommand{\cT}{{\cal T}}
\newcommand{\cU}{{\cal U}}
\newcommand{\red}[1]{\textcolor{red}{#1}}
\newfont{\blb}{msbm10 scaled\magstep1}
\newfont{\comp}{cmr12 scaled\magstep1}
\newfont{\compb}{cmr10 scaled\magstep2}
\newfont{\sbb}{cmssbx10 scaled\magstep3}
\newfont{\sbbb}{cmssbx10 scaled\magstep5}
\newfont{\sbs}{cmssbx10 scaled\magstep1}
\newtheorem{theorem}{Theorem}
\newtheorem{lemma}{Lemma}
\newtheorem{claim}[subsection]{Claim}
\newtheorem{corollary}{Corollary}
\newtheorem{proposition}{Proposition}
\newtheorem{observation}{Observation}
\newtheorem{conjecture}[foo]{Conjecture}
\theoremstyle{definition}
\newtheorem{definition}{Definition}
\newtheorem{remark}{Remark}
\newtheorem*{rep@theorem}{\rep@title}
\newcommand{\newreptheorem}[2]{%
	\newenvironment{rep#1}[1]{%
		\def\rep@title{#2 \ref{##1}}%
		\begin{rep@theorem}}%
		{\end{rep@theorem}}}
\Crefname{owntheorem}{Theorem}{Theorems}
\Crefname{owncorollary}{Corollary}{Corollaries}
\Crefname{repcorollary}{Corollary}{Corollaries}
\Crefname{theorem}{Theorem}{Theorems}
\title{Improved bounds for the minimum degree of minimal multicolor Ramsey graphs}
\renewcommand\@date{{%
		\vspace{-\baselineskip}%
		\large\centering
		\begin{tabular}{@{}c@{}}
			Yamaan Attwa\thanks{Institue for Mathematics, Freie Universit\"at Berlin, Germany. Funded by the Deutsche Forschungsgemeinschaft (DFG, German Research Foundation) under Germany´s Excellence Strategy – The Berlin Mathematics Research Center MATH+ (EXC-2046/1, project ID: 390685689).} 
		\end{tabular}
		\quad \begin{tabular}{@{}c@{}}
			Sam Mattheus\thanks{Department of Mathematics, Vrije Universiteit Brussel, Pleinlaan 2, 1050 Brussels, Belgium. E-mail: sam.mattheus@vub.be. Research supported by postdoctoral fellowship 1267923N from the Research Foundation Flanders (FWO).}   \end{tabular}%
		\quad   \begin{tabular}{@{}c@{}}
			Tibor Szabó\thanks{Institute  for Mathematics, Freie Universit\"at Berlin, Germany.}
		\end{tabular}
		\quad  \begin{tabular}{@{}c@{}}
			Jacques Verstraete\thanks{ Department of Mathematics, University of California, San Diego, 9500 Gilman Drive, La Jolla, CA 92093-0112, USA. E-mail: jacques@ucsd.edu. Research supported by the National Science Foundation FRG Award DMS-1952786 and NSF Award DMS-2347832.} 
		\end{tabular}
		
		\bigskip
		\bigskip
		\today

}}
\begin{document}
	\maketitle
	\begin{abstract}
		%Let $H$ be any graph. A graph $G$ is {\em $r$-Ramsey-minimal} for $H$ if every $r$-coloring of the edges of
		%$H$ gives a monochromatic copy of $H$ but no proper subgraph of $G$ has this property.
		%Let $s_r(H)$ denote the smallest minimum degree of any $r$-Ramsey minimal graph for $H$.
		%We improve various previous upper bounds 
		%%due to Fox, Grinshpun, Liebenau, Person and Szabo, who showed $s_r(K_k) = O_k(r^2(\log r)^{8k^2})$, Bishnoi, Bamberg and Lesgourgues, who showed $s_r(K_k) = O(k^5r^{5/2})$, and Bishnoi and Lesgourgues, who showed $s_r(K_k) = O(k^3r^3(\log r)^3)$
		%by showing for some absolute constant $c$ that for $k, r \geq 3$,  $s_r(K_{k+1}) \leq 2^{200(k+1)} r^2 \log r$ and also 
		%\[s_r(K_{k+1}) \leq (rk)^{2 + \frac{c}{k}}[(\log k)^2 + (\log k)^{1 + \frac{c}{k}}(\log r)^{1 + \frac{c}{k}}].\]
		%This implies for $k = \Omega(\log r)$ and $k = O(1)$ that $s_r(K_{k+1}) = O((rk\log k)^2 \log r)$.
		%This is close to best possible since for all $k,r \geq 3$, $s_r(K_{k+1}) \geq k^2$ and 
		%for $k = O(1)$, $s_r(K_{k+1}) = \Omega(r^2 \log r/\log \log r)$.
		
		We provide two novel constructions of $r$ edge-disjoint $K_{k+1}$-free graphs on the same vertex set, each of which has the property that every small induced subgraph contains a complete graph on $k$ vertices. The main novelty of our argument is the combination of an algebraic and a probabilistic coloring scheme, which utilizes the beneficial algebraic and combinatorial properties of the Hermitian unital. These constructions improve on a number of upper bounds on the smallest possible minimum degree of minimal $r$-color Ramsey graphs for the clique $K_{k+1}$ when $r\geq c\frac{k}{\log^2 k}$ and $k$ is large enough.  
		%We also resolve a question of Tran regarding the colored semisaturation of complete graphs. 
	\end{abstract}
	\section{Introduction}
	%\subsection{}
	%In 1976, Burr, Erd\H{o}s and Lov\'asz~\cite{burr1976graphs} initiated a systematic study of various properties of Ramsey graphs. TSz: Are they really studying PROPERTIES or just EXTREMAL VALUES OF PARAMETERS?
	We say that a graph $G$ is {\em $r$-Ramsey} for a graph $H$, denoted by $G \to (H)_r$ if every $r$-colouring of the edges of $G$ contains a monochromatic copy of $H$.  A graph $G$ is called {\em $r$-Ramsey-minimal for $H$} if it is $r$-Ramsey for $H$, but no proper subgraph of it is. The set of all $r$-Ramsey-minimal graphs for $H$ is denoted by $\cM_r(H)$. The classical Ramsey number $R_r(H)$, one of the %best
	most well-studied parameters in Combinatorics, is then the smallest number of vertices of a graph in $\cM_r(H)$. 
	Following the pioneering work of Folkman~\cite{folkman1970graphs} on the smallest clique number of Ramsey graphs for the clique, Burr, Erd\H{o}s and Lov\'asz~\cite{burr1976graphs} in 1976 initiated the systematic study of the extremal behaviour of several other graph parameters. 
	%In~\cite{burr1976graphs} Burr, Erd\H{o}s, and Lov\'asz investigated maximum/mainimum values of different parameters for graphs in $\cM_r(H)$, 
	In their seminal paper they investigated the chromatic number, the maximum and the minimum degree, and the connectivity.
	%Since their work, many researchers have studied various properties of this class and the graphs it contains. TSz: Again are they really studying PROPERTIES?
	Subsequently, their work inspired many further investigations, e.g. \cite{horn2014degree, jiang2013degree, zhu1998chromatic, rodl2008ramsey,conlon2023three, nevsetvril1976ramsey, folkman1970graphs, fox2016minimum, han,bamberg2022minimum}. 
	%TSz: THIS IS CERTAINLY NOT TRUE FOR every H: For example, it follows from the work of R\"odl and Siggers~\cite{RS08} that $\cM_r(H)$ is infinite. TSz: I would delete as a somewhat random example.
	
	%We are particularly interested in the case when the graph parameter is the minimum degree and $H = K_{k+1}$, the complete graph on $k+1$ vertices.  
	In this paper we will be particularly interested in the minimum degree of minimal Ramsey graphs. For a graph $H$ and number $r$ of colors we define %use the following definition for any two positive integers $k$ and $r$
	\[s_r(H) := \min\{\delta(G) \,\, | \,\, G \in \cM_r(H)\},\]
	to be the smallest possible minimum degree that could occur among minimal $r$-Ramsey graphs for $H$.  For cliques in the classical two-color case, Burr et al.\ ~\cite{burr1976graphs} established the following precise result: 
	\begin{equation}
		s_2(K_{k})=(k-1)^2.
	\end{equation}
	Upon first glance, this result looks extremely surprising. 
	%for a couple of reasons.
	First, it determines the {\em exact} value of the smallest possible minimum degree in a minimal 2-Ramsey graph for $K_{k}$. This is in stark contrast with our knowledge about the smallest {\em number of vertices} in such graphs, which is hopelessly out of reach. Furthermore, the value of the smallest minimum degree turns out to be just a quadratic function of $k$. This is incredibly small considering that we know that even the smallest of Ramsey graphs will have exponentially many vertices. How could it then be possible to create a (necessarily enormous) $2$-Ramsey graph for $K_{k}$, that has a vertex with just $(k-1)^2$ neighbors, such that the presence of this vertex, and in fact any of its incident edges are {\em crucial} in guaranteeing the $2$-Ramsey-ness of said enormous graph?
	
	Fox, Grinshpun, Liebenau, Person and Szab\'o~\cite{fox2016minimum} investigated the behaviour $s_r(K_{k})$ for more than two colors. They found that for any fixed clique order $k\geq 3$ there exist positive constants $c_k,C_k$, such that 
	\begin{equation} \label{eq:fox-constantk}
		c_kr^2\frac{\log r}{\log\log r} \leq s_r(K_{k}) \leq C_kr^2(\ln r)^{8k^2}.
	\end{equation}
	For the triangle $K_3$, a slightly stronger lower bound was given in \cite{fox2016minimum}, which was proved to be tight up to a constant factor by Guo and Warnke~\cite{guo2020packing}. 
	\begin{equation}
		s_r(K_{3}) = \Theta (r^2 \log r).
	\end{equation}
	These results establish that for any fixed clique order $k$ the value of the smallest minimum degree $s_r(K_k)$ is quadratic in the number $r$ of colors, up to some logarithmic factor. The power of the logarithm in the gap between the upper and lower bounds however depends significantly on $k$. 
	
	On the other end of the spectrum, when the number $r$ of colors is constant, H\`an, R\"odl, and Szab\'o ~\cite{han} determined the order of magnitude of the smallest minimum degree $s_r(k)$, up to a logarithmic factor. 
	More generally, they have shown that there exists a constant $C$ such that for every $k^2 >r$
	%there exists a constant $C_r$ such that 
	%\begin{equation}\label{eq:han-constantk}
	%  s_r(K_{k}) \leq C_r k^2 \log^2 k.  
	%\end{equation}
	% Since one can see that $s_2(K_k) \leq s_r(K_k)$, coupled with the result of Burr et al.\ above, this result establishes that having more than two colors does not increase the smallest minimum degree $k^2$ by much, by at most a $C_r\log^2 k$-factor only.  
	\begin{equation} \label{eq:han-general}
		s_r(K_k) \leq C r^3k^2 \log^3 r \log^2 k.
	\end{equation}
	Considering that we know from \cite{fox2016minimum} and \cite{burr1976graphs} that
	$s_r(K_k) \geq s_2(K_k) = (k-1)^2$, the bound (\ref{eq:han-general}) establishes that $s_r(K_k)$ is quadratic up to a $\log^2$-factor for any fixed number $r$ of colors.
	
	When both $k$ and $r$ are increasing, say $r=r(k)$ is a decent increasing function of $k$, the best known upper bounds vary depending on how fast $r$ grows. In the range $r(k) < k^2$, the upper bound of (\ref{eq:han-general}) is the best we know.
	
	In the complementary range of $r(k)\geq k^2$ another construction of Fox et al.\ \cite{fox2016minimum} gives\footnote{In \cite{fox2016minimum} only the weaker upper bound $s_r(K_k) \leq q^3=O(r^3k^6)$ is stated. However, Bishnoi and Lesgourgues~\cite{bishnoi2022new} recently observed that the choice $q\sim rk^2$ used in \cite{fox2016minimum} for the parameter $q$ is suboptimal and the calculation there also works with $q \sim rk \log k$, resulting in (\ref{eq:fox-polynomial}).} 
	\begin{equation}\label{eq:fox-polynomial}
		s_r(K_k) = O(r^3k^3 \log^3 k).
	\end{equation}
	Bamberg, Bishnoi, and Lesgourgues~\cite{bamberg2022minimum} developed a generalization of this construction and used that to obtain
	\begin{equation}\label{eq:bamberg}
		s_r(K_k) = O(r^{5/2}k^5).
	\end{equation}
	This represents the best known upper bound when $r(k)\geq \Omega \left(\frac{k^4}{\log^6 k}\right)$ and $k$ tends to infinity.\\
	
	The minimum degree of minimal Ramsey graphs has also been the subject of considerable study beyond the quantitative behavior of $s_r(K_k)$: \cite{bishnoi2023minimum, clemens2015minimum} deal with the generalizations of the parameter to asymmetric settings and hypergraphs, \cite{szabo2010minimum, boyadzhiyska2025ramsey, grinshpun2017minimum} tackle the question of \textit{$q$-Ramsey simplicity}, while \cite{boyadzhiyska2022minimal} investigates the {\em number} of vertices that can attain the degree $s_r(H)$.

	\subsection{Our results} 
	
	\subsubsection{Upper bounds}
	Summarizing the above: (1) When either the order $k$ of the clique or the number $r$ of colors is constant, the smallest minimum degree $s_r(K_{k})$ is quadratic, up to poly-logarithmic factors, in terms of $r$ and $k$, respectively; (2) when $k$ and $r$ both tend to infinity, the best known upper bounds are polynomial, with the degree of $r$ (and sometimes also of $k$) being more than two.  
	Bamberg et al.\ \cite{bamberg2022minimum} in fact conjectured that an upper bound $r^2k^2$, up to logarithmic factors, should hold in all ranges of the parameters. 
	
	In this paper we give new constructions which establish this for a large range of the parameters and improve the best known upper bounds for every large enough $k$ and $r\geq c \frac{k}{\log^2 k}$.
	
	Our first main theorem improves the bounds (\ref{eq:bamberg}), (\ref{eq:fox-polynomial}) and (\ref{eq:han-general}) whenever $k$ tends to infinity and $r$ is large enough.
	\begin{theorem}\label{thm: MinDegGen}
		For all sufficiently large $k,r$ satisfying $k \leq r \log^2 r$, we have 
		\[s_r(K_{k}) \leq 2^{400} k^2r^{2+\frac{30}{k}} \log^{20}r\log^{20}k.\]
	\end{theorem}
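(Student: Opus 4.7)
The plan is to use the gadget-plus-signal-sender framework of Fox, Grinshpun, Liebenau, Person and Szab\'o~\cite{fox2016minimum} and its refinement by Bamberg, Bishnoi and Lesgourgues~\cite{bamberg2022minimum}, which reduces the upper bound on $s_r(K_k)$ to the construction of a \emph{coloring gadget}: an $r$-edge-coloring of a graph on $n$ vertices into edge-disjoint $K_{k+1}$-free color classes $G_1,\ldots,G_r$ such that every subset of at least $t$ vertices contains a copy of $K_k$ in every $G_i$. Signal senders then assemble such a gadget into a member of $\cM_r(K_k)$ of minimum degree $O(rt\cdot\log^{O(1)}(rk))$, so the theorem will follow from producing such a gadget with $t = O(k^2 r^{1+30/k}\log^{O(1)}(rk))$.

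\medskip
\textbf{Algebraic + probabilistic construction.} As suggested by the abstract, I would take the Hermitian unital $\cU(q)\subset\PG(2,q^2)$ with $q$ of order $k$ as the algebraic backbone: a $2$-$(q^3+1,q+1,1)$ design on which the unitary group $\mathrm{PGU}(3,q)$ acts transitively, and whose block structure enjoys strong pseudorandom guarantees. The vertex set of the gadget is a collection of $n\approx q^3\cdot\log^{O(1)}(rk)$ unital objects (points, or flags, or carefully chosen point--secant incidences), and its edges are partitioned into $\Theta(q^2)$ \emph{algebraic} classes defined by orbit invariants of a well-chosen subgroup of $\mathrm{PGU}(3,q)$. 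Two properties of $\cU(q)$ are crucial: each secant meets $\cU(q)$ in exactly $q+1$ points, so that each algebraic class has clique number at most $q+1\leq k$, yielding $K_{k+1}$-freeness for free; and the unital's pseudorandomness produces a dense, quasi-random family of $K_k$ copies inside every sufficiently large subset. To shrink the palette to $r$ colors I would merge the algebraic classes via a uniformly random surjection $\phi:[\Theta(q^2)]\to[r]$ and set $G_i:=\bigcup_{\phi(j)=i}(\text{algebraic class }j)$; $K_{k+1}$-freeness is preserved since each $G_i$ is a union of $K_{k+1}$-free algebraic classes.

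\medskip
\textbf{Verification and main obstacle.} The technical heart of the argument is property (b): every $t$-subset must contain a $K_k$ in every color $G_i$ after the merger. For each fixed $U$ and $i$ the algebraic $K_k$-abundance of $\cU(q)$ gives a positive expected number of algebraic $K_k$'s in $G_i[U]$ of the form $t^k/(q^{O(k^2)} r^{c(k)})$, and a Lov\'asz Local Lemma or Kim--Vu polynomial concentration inequality should upgrade this to a simultaneous positive-probability statement over all $\binom{n}{t}\cdot r$ pairs. The main obstacle is quantitative: the union-bound factor $(en/t)^t$ across subsets has to be absorbed into the product of survival probabilities, which forces a delicate balance between $q$, $n$, $t$ and $r$. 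The optimum sits at $q=\Theta(k)$ and $t=\Theta(k^2 r^{1+30/k}\log^{O(1)}(rk))$, with the extra $r^{30/k}$ in the exponent arising from the $r^{O(1/q)}\approx r^{O(1/k)}$ slack in the unital's quasirandomness, and the $k^2$ coefficient from the $q^2$ cardinality of the algebraic partition --- precisely the hallmark of the combined algebraic/probabilistic coloring announced in the abstract.
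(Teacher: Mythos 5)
Your reduction step is essentially the paper's (via $s_r(K_{k+1})=P_r(k)$ and the observation that it suffices to build a $K_{k+1}$-free color pattern $G_1,\dots,G_r$ on $n$ vertices with $\alpha_k(G_i)<n/r$), but the construction you then sketch does not work, for three concrete reasons. First, your parameters are inconsistent: with $q=\Theta(k)$ and $n\approx q^3\log^{O(1)}(rk)$ the vertex set does not grow with $r$, while you need every color class to contain a $K_k$ inside every subset of size $n/r$; for large $r$ your claimed $t=\Theta(k^2r^{1+30/k}\log^{O(1)}(rk))$ even exceeds $n$. The bound being proved forces $n=(rk)^{2+o(1)}$, so $q$ must depend on $r$: in the paper $q\approx 2^{100}k^{1/2}r^{1/2+15/(2k)}\log^5r\log^5k$ and the vertices are the $\Theta(q^4)$ common secants of a pencil of $\lfloor q/2\rfloor$ pairwise disjoint Hermitian unitals in $\PG(2,q^2)$, not the $\approx q^3$ points of a single unital. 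Second, and more fundamentally, the two properties you want to combine are incompatible in your setup: if the cliques of a color class come only from secant sections of size $q+1\le k$, then $K_{k+1}$-freeness may be cheap, but $\alpha_k$ is enormous --- a uniformly random half of the vertices fully contains any fixed such $k$-clique with probability $2^{-k}$, so after removing one vertex from each of the few surviving cliques one gets a $K_k$-free induced subgraph on roughly $n/2$ vertices, i.e.\ $\alpha_k(G_i)=\Theta(n)\gg n/r$. This is precisely why the paper works in the dual (vertices are lines, and the point-cliques through a point of a given color have size $\Theta(q/c)\gg k$, making copies of $K_k$ abundant) and why an Erd\H{o}s--Rogers-type sparsification is unavoidable. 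Third, your assertion that ``$K_{k+1}$-freeness is preserved since each $G_i$ is a union of $K_{k+1}$-free algebraic classes'' is false: a union of $K_{k+1}$-free graphs on a common vertex set need not be $K_{k+1}$-free, and handling exactly this issue is the heart of the problem.

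For comparison, the paper's algebraic coloring (one color per unital of the pencil, refined by a random coloring of each unital's points into $c\approx 8r/q$ classes) does \emph{not} make the classes $K_{k+1}$-free; it only guarantees, via the secant-intersection property of Hermitian unitals due to Mubayi and Verstra\"ete, that every monochromatic $K_{k+1}$ among the secants is either contained in a single point-clique (``degenerate'') or is a $(k+1)$-fan meeting some point-clique in exactly $k$ vertices. Each $\tilde G_i$ is then sparsified by independently partitioning every point-clique into $k$ active parts of density $\alpha/k$ and one large inactive part, keeping only edges between distinct active parts: degenerate copies die automatically, each fan survives with probability at most $\alpha^{3k}$ and the fans are few enough for a union bound, while a first-moment argument over all $\binom{n}{n/r}$ subsets gives $\alpha_k(G_i)<n/r$. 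The choice $\alpha=r^{-15/(2k)}\log^{-4}r\log^{-4}k$, forced by balancing these two events, is what produces the $r^{30/k}$ in the exponent (it is not a ``quasirandomness slack'' of the unital). Your sketch leaves this entire verification --- where all the work lies --- unproved, so as it stands there is a genuine gap.
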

	Note that this upper bound is of the form $(rk)^{2+o(1)}$ and the error term becomes logarithmic when $r(k) = e^{O(k\log k)}$. 
	
	For constant $k$, our second main result reduces the power of the log factor in the upper bound of (\ref{eq:fox-constantk}) from $8k^2$ to $2$.
	\begin{theorem}\label{thm: MinDegConst}
		For all $k \geq 3$ there exists a constant $C_k$ such that for all $r \geq 2$
		\[s_r(K_{k}) \leq C_k (r\log r)^2. \]
	\end{theorem}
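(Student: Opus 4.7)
The plan is to reduce the theorem to the core combinatorial statement advertised in the abstract: build, for each fixed $k \geq 3$ and sufficiently large $r$, a family of $r$ edge-disjoint $K_k$-free graphs $G_1, \dots, G_r$ on a common vertex set $V$ of size $n = O_k((r \log r)^2)$ satisfying the following partition-robustness property: in each $G_i$, every induced subgraph on at least $n/r$ vertices contains a copy of $K_{k-1}$. Given such a family, by pigeonhole any $r$-partition $V = U_1 \cup \dots \cup U_r$ has some part $U_i$ of size $\geq n/r$, which therefore carries a $K_{k-1}$ in $G_i$. Combined with the signal-sender gadgetry for clique Ramsey problems from \cite{burr1976graphs, fox2016minimum, bamberg2022minimum}, this suffices to produce a minimal $r$-Ramsey graph for $K_k$ with a single pendant vertex of degree $n$, giving $s_r(K_k) \leq n$.

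To construct the family I would combine the two ingredients highlighted in the abstract. Fix a prime power $q$ of the appropriate size so that the Hermitian unital $\cU \subset \PG(2, q^2)$ (or a natural substructure built from it) produces a vertex set $V$ with $|V| = \Theta_k((r\log r)^2)$. The algebraic backbone is a base graph $H$ on $V$ whose edges encode an incidence or collinearity relation inside $\cU$; the crucial features of $H$ are that \emph{(i)} it is $K_k$-free, by virtue of the combinatorial rigidity of unital lines, and \emph{(ii)} every sufficiently large subset $U \subseteq V$ induces many copies of $K_{k-1}$ in $H$, far more than a generic edge-density estimate would guarantee. Property \emph{(ii)} would be established via Weil-type character-sum bounds associated with the Hermitian form, exploiting the large action of $\mathrm{PSU}(3,q)$ on $\cU$.

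The probabilistic step then randomly $r$-colors the edges of $H$, setting $G_i$ equal to the monochromatic subgraph of color $i$. Since $G_i \subseteq H$, each $G_i$ is $K_k$-free for free. For a fixed partition $V = U_1 \cup \dots \cup U_r$, the pigeonhole index $i^\star$ with $|U_{i^\star}| \geq n/r$ admits, by \emph{(ii)}, many $K_{k-1}$ copies in $H[U_{i^\star}]$, each landing inside $G_{i^\star}$ with probability $r^{-\binom{k-1}{2}}$. Janson's inequality, to handle overlaps between $K_{k-1}$ copies, gives that the probability of no surviving copy is exponentially small, and a union bound over the at most $r^n$ partitions closes precisely when $q$ is chosen at the scale dictated by $n = \Theta_k((r\log r)^2)$.

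The main obstacle I anticipate is the uniform counting lemma \emph{(ii)}: it must be sharp enough that after losing a factor $r^{\binom{k-1}{2}}$ from the random coloring and paying the $r^n$-cost of the union bound, the exponent still lands on the right side. Generic expander-mixing or pseudorandomness arguments do not give nearly enough $K_{k-1}$ copies in arbitrary subsets when $k \geq 4$; the structural properties of the Hermitian unital, specifically algebraic control over how small cliques distribute across subsets, are what make it possible to shave the bound down to $(r\log r)^2$. Once \emph{(ii)} is in hand, the remaining steps — Janson plus union bound for the coloring, and signal-sender assembly for the Ramsey graph — are by now essentially routine applications of techniques already present in the literature.
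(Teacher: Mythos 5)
Your reduction to a color pattern (via Observation~\ref{obs: ColorPatter} and the identity $s_r=P_r$ of Fox et al.) is the same as the paper's, but the construction you propose has a fatal quantitative flaw: you cannot obtain the $r$ edge-disjoint graphs by randomly $r$-coloring the \emph{edges of a single} $K_k$-free base graph $H$. Each color class then contains only a $1/r$-fraction of $e(H)$, and a short count shows this is far too sparse for the property you need. With $n=\Theta_k((r\log r)^2)$ and $m=n/r=\Theta_k(r\log^2 r)$, a subset $U$ of size $m$ contains at most $\binom{m}{k-1}\le m^{k-1}$ copies of $K_{k-1}$ in $H[U]$, and each survives monochromatically in a fixed color with probability $r^{-\binom{k-1}{2}}$; hence the expected number of copies of $K_{k-1}$ in $G_i[U]$ is at most $m^{k-1}r^{-\binom{k-1}{2}}=r^{(k-1)(4-k)/2}\,\mathrm{polylog}(r)$. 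For $k\ge 5$ this tends to $0$, so a typical $U$ simply has no $K_{k-1}$ in $G_i[U]$ and the statement you want is false for your $G_i$; for $k=4$ the expectation is only polylogarithmic, while the union bound over $\binom{n}{m}=\exp(\Theta(m\log r))$ subsets would require failure probability $\exp(-\Omega(m\log r))$, which Janson cannot deliver from a polylogarithmic mean. (Only $k=3$ is borderline, which is exactly the Guo--Warnke regime.) A second, independent problem is your premise (i): the graphs one builds from secant lines of a Hermitian unital are \emph{not} clique-free by rigidity of the lines; in the actual constructions every color graph does contain copies of $K_{k+1}$, and the unital structure only forces each such copy to be of one of two special shapes (degenerate in a point-clique, or a fan), after which clique-freeness must be manufactured by a random sparsification inside the point-cliques. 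No Weil-type character-sum counting enters.

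The paper avoids both problems by making each color carry a full-density incidence structure of its own rather than a $1/r$-fraction of one extremal graph. Working dually, the vertex set is the set $L$ of common secants to a pencil of $\lfloor q/2\rfloor$ pairwise disjoint Hermitian unitals in $\PG(2,q^2)$ with $q^4\approx C_k(r\log r)^2$; colors are assigned to the intersection \emph{points}: algebraically each point lies in exactly one unital of the pencil, and inside each unital the points are colored uniformly at random with $c\approx q/(128\log q)$ further colors (Lemma~\ref{lem:coloringpoints}), giving $c\lfloor q/2\rfloor\ge r$ colors in total. Two lines are adjacent in $\tilde G_i$ when their intersection point has color $i$, so the $\tilde G_i$ are automatically edge-disjoint and each is a union of point-cliques with the structural properties of Lemma~\ref{lem: graphlemma}. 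Each $\tilde G_i$ is then separately sparsified by the black-boxed Erd\H{o}s--Rogers theorem of Mubayi and Verstraete (Remark~\ref{Rem:MVConstant}) to obtain a $K_{k+1}$-free spanning subgraph $G_i$ with $\alpha_k(G_i)\le 2^{40k+8}q^2\log q<n/r$. If you want to salvage your write-up, the coloring must happen at the level of the hyperedges (points), not the edges of one clique-free graph, and clique-freeness must come from the within-point-clique sparsification, not from the base structure.
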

	Combined with the lower bound of (\ref{eq:fox-constantk}), Theorem~\ref{thm: MinDegConst} determines the value of $s_r(K_k)$ up to a factor $O(\log r \log\log r)$, for every fixed $k\geq 4$. 
	
	\subsubsection{Colored semisaturation numbers}
	
	Tran~\cite{tran2022two} observed that a certain graph parameter, introduced by Dam\'asdi et al.\ ~\cite{damasdi2021saturation}, related to colored saturation, is also relevant for $s_r(K_k)$.
Given integers $r,k \geq 2$, let ${\cal RC}_r(K_k)$ be the set of edge $r$-colorings of complete graphs, for which any extension to an edge $r$-coloring of a complete graph of one larger order creates a new monochromatic copy of $K_k$. The $r$-color semisaturation number $\mathrm{ssat}_r(K_{k})$ is defined to be the {\em smallest} order $n$ such that there exists an edge $r$-coloring of $E(K_n)$ in ${\cal RC}_r(K_k)$. 
	This quantity was first investigated, within a more general framework, by Dam\'asdi et al.\ \cite{damasdi2021saturation}. Tran \cite{tran2022two} observed that 
	\begin{equation} \label{eq:semisat}
		\mathrm{ssat}_r(K_k) \leq s_r(K_k)
	\end{equation}
	and asked \cite[Question 4.2]{tran2022two} whether there exists a constant $C$ (independent of $k$) such that $\mathrm{ssat}_r(K_{k}) = O_k(r^2(\log r)^C)$.
	%motivated by earlier upper bounds on $s_r(K_k)$. 
	Our \Cref{thm: MinDegConst} together with (\ref{eq:semisat}) answers this question in the affirmative. 
	\begin{corollary}
		For all $k \geq 4$, there exists a constant $C_k$ such that  $\mathrm{ssat}_r(K_k) \leq C_kr^2 \log^2 r$.
	\end{corollary}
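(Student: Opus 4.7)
The plan is to derive the corollary as essentially a one-line consequence of \Cref{thm: MinDegConst} together with Tran's inequality (\ref{eq:semisat}). Fix $k \geq 4$; then $k \geq 3$, so \Cref{thm: MinDegConst} applies and supplies a constant $C_k$ with $s_r(K_k) \leq C_k (r\log r)^2$ for every $r \geq 2$. Chaining this bound with $\mathrm{ssat}_r(K_k) \leq s_r(K_k)$ yields $\mathrm{ssat}_r(K_k) \leq C_k r^2 \log^2 r$, which is the claimed inequality (with the same constant $C_k$). No additional estimate is needed.

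For completeness I would briefly recall why (\ref{eq:semisat}) holds, following Tran's observation. Let $G$ be a minimal $r$-Ramsey graph for $K_k$ and let $v$ be a vertex of minimum degree $d = s_r(K_k)$ with neighborhood $N = N(v)$. By minimality, for every edge $e$ incident to $v$ there is an $r$-coloring of $G-e$ with no monochromatic $K_k$, and any way of extending such a coloring by assigning a color to $e$ forces a monochromatic $K_k$ through $v$. A suitable amalgamation of these colorings, restricted to the edges inside $N$, produces an $r$-coloring of $K_N$ with the following property: introducing any new vertex $v'$ (playing the role of $v$) and coloring its incident edges in any way must create a new monochromatic $K_k$. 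In other words, this coloring lies in ${\cal RC}_r(K_k)$, and hence $\mathrm{ssat}_r(K_k) \leq |N| = s_r(K_k)$.

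There is no genuine obstacle in the corollary itself: all the work is packed into \Cref{thm: MinDegConst}, and the role of the corollary is simply to record what that theorem implies for colored semisaturation, thereby answering Tran's \cite[Question 4.2]{tran2022two} in the affirmative. If one wants a fully self-contained presentation, the only added ingredient to write out carefully is the amalgamation step in the proof of (\ref{eq:semisat}); this is routine once the minimality of $G$ is used to obtain, for each $e \ni v$, an $r$-coloring of $G-e$ with no monochromatic $K_k$.
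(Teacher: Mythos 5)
Your proposal is correct and is exactly the paper's derivation: the corollary is obtained by chaining Tran's inequality (\ref{eq:semisat}), $\mathrm{ssat}_r(K_k)\leq s_r(K_k)$, with \Cref{thm: MinDegConst}, with no further estimates. Your optional sketch of (\ref{eq:semisat}) is not needed (the paper cites Tran and later proves the inequality via Proposition~\ref{prop:ssat-P-sat}, going through the identity $s_r(K_{k+1})=P_r(k)$ and extending a color pattern arbitrarily to a coloring in ${\cal RC}_r(K_{k+1})$), and the ``amalgamation'' of colorings of $G-e$ over all edges $e$ at $v$ that you gesture at is shakier than the standard argument, which uses a single monochromatic-$K_k$-free coloring of $G$ minus the minimum-degree vertex restricted to its neighborhood.
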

	
	It turns out however that for $\mathrm{ssat}_r(K_k)$ one can prove an even stronger bound. 
	In the same paper, Tran asks whether $\mathrm{ssat}_r(K_{k+1}) = \omega(r^2)$ as $r \to \infty$ \cite[Question 4.1]{tran2022two}. The following theorem answers this question in the negative.
	\begin{theorem}\label{thm:semisaturation}
		For all $k,r \geq 2$, we have $\mathrm{ssat}_r(K_{k}) \leq 4(k-2)^2r^2$.
	\end{theorem}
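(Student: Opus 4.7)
\emph{Plan.} I would start by reducing the semisaturation condition to a combinatorial coloring property. Given an $r$-edge-coloring of $K_n$, any extension to $K_{n+1}$ amounts to adding a new vertex $v$ whose incident edge colors induce a partition $V(K_n) = N_1 \sqcup \cdots \sqcup N_r$ (with $N_i$ the color-$i$ neighbors of $v$). Any new monochromatic $K_k$ must contain $v$, so the coloring lies in ${\cal RC}_r(K_k)$ precisely when, for every such partition, some $N_i$ contains a monochromatic $K_{k-1}$ in color $i$.

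Next, the plan is to build such a coloring algebraically using the affine plane over a finite field. Assume $k \geq 3$ (the $k=2$ case gives bound $0$, trivially attained by the empty graph). By Bertrand's postulate there is a prime $q$ with $(k-2)r < q \leq 2(k-2)r$; take $V = \mathbb{F}_q^2$, so $|V| = q^2 \leq 4(k-2)^2 r^2$. The affine plane $AG(2,q)$ has $q+1$ parallel classes of lines, each class partitioning $V$ into $q$ lines of $q$ points. Since $q+1 > r$, pick any $r$ of these classes, $\cC_1, \dots, \cC_r$. Color every edge $\{u,w\}$ by color $i$ if the unique affine line through $u$ and $w$ belongs to $\cC_i$, and by color $1$ otherwise.

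To verify the partition condition, note that each line $L \in \cC_i$ is a monochromatic clique in color $i$ of size $q$, so $|N_i \cap L| \geq k-1$ already produces a color-$i$ $K_{k-1}$ inside $N_i$. Suppose for contradiction that no $N_i$ has this property; then $|N_i \cap L| \leq k-2$ for every $L \in \cC_i$. Since the $q$ lines of $\cC_i$ partition $V$, summing gives $|N_i| \leq (k-2)q$, and hence
\[ q^2 = |V| = \sum_{i=1}^r |N_i| \leq r(k-2)q, \]
so $q \leq r(k-2)$, contradicting the choice of $q$. Therefore some $N_i$ contains a monochromatic $K_{k-1}$ in color $i$, and the construction witnesses $\mathrm{ssat}_r(K_k) \leq q^2 \leq 4(k-2)^2 r^2$.

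This proof reduces to a short pigeonhole calculation once the affine-plane coloring is in hand, and no step should present a serious difficulty. The essential idea is to calibrate $q$ just above $r(k-2)$ so that the per-color upper bound $|N_i| \leq (k-2)q$ cannot sum to $|V|=q^2$. Dumping edges whose line lies outside the chosen parallel classes into color $1$ is harmless, as these extra edges only enlarge the graph of color $1$ and can never obstruct the $i=1$ case of the argument.
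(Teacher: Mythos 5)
Your proposal is correct and takes essentially the same approach as the paper: both constructions use the affine plane of order $q$ with $q$ a prime chosen just above $(k-2)r$, take the parallel classes as (most of the) color classes, absorb the surplus parallel classes into one color, and use a pigeonhole/averaging count over a parallel class to force a line with at least $k-1$ points of some $N_i$. The only cosmetic differences are that you phrase the count as a contradiction rather than directly picking the largest $N_i$, and you make the reduction to a vertex-partition condition explicit, which is implicit in the paper's write-up.
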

	When $k$ is fixed and $r$ goes to infinity, \Cref{thm:semisaturation} together with the lower bound of \cite{fox2016minimum} from (\ref{eq:fox-constantk}) establishes a separation by a factor $\frac{\log r}{\log\log r}$ between the orders of magnitude of $\mathrm{ssat}_r(K_{k})$ and $s_r(K_k)$.
	%  Recall that Fox et al \cite{fox2016minimum} proved that $s_r(K_k)\geq c_k r^2\ln r/\ln \ln r$ for some $c_k >0$. This alongside \Cref{thm:semisaturation} already show an asymptotic separation between $\mathrm{ssat}_r(K_{k})$ and $s_r(K_k)$ when $k$ is fixed and $r$ grows arbitrarily.
	%The main objective of this paper is to study the more general multi-color setting of the same parameter. Instead of dealing with $s_r(K_k)$ directly however, we work with an equivalent combinatorial parameter first mentioned in ~\cite{burr1976graphs}, later formalized explicitly in ~\cite{fox2016minimum}, and for which we need a couple of definitions.   

	\subsubsection{Lower bounds}
	Except for the (nearly) extremal ranges, i.e. when either the clique order $k$ or the number $r$ of colors is (nearly) constant, we do not have a lower bound on $s_r(k)$ which is quadratic both in $k$ and $r$ up to poly-logarithmic factors. 
	Damásdi et al.\ ~\cite{damasdi2021saturation} gave a lower bound of %{\bf Please check this:} $(r-1)k^2 - 3rk +4k +2r-3 =  
    $\Omega (k^2r)$ on $\mathrm{ssat}_r(K_k)$ that transfers to a lower bound on $s_r(K_k)$ via the observation (\ref{eq:semisat}) of Tran~\cite{tran2022two}.
	%{\bf Is your bound better?}
    %\red{To use that paper we use $c = r$ and $k_1 = \dots = k_c = k$ and hence I find $(k-1)((r-1)k-2r+3)$ which is not quite the same right?}
	
	Here we prove a lower bound that is quadratic in $r$ and linear in $k$. 
	
	\begin{theorem}\label{thm: LowerKk+1}
		For all $k,r \geq 3$, we have $s_r(K_{k}) = \Omega(kr^2)$.
	\end{theorem}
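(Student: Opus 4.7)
The plan is to analyze the neighborhood structure of a minimum-degree vertex in a minimal $r$-Ramsey graph via the Lov\'asz Local Lemma. Let $G \in \cM_r(K_k)$ satisfy $\delta(G) = d = s_r(K_k)$ and let $v \in V(G)$ have $\deg_G(v) = d$. By minimality of $G$, for every edge $e$ incident to $v$ the graph $G - e$ is not $r$-Ramsey, and restricting any good coloring of $G - e$ to $G - v$ yields a good $r$-coloring $\chi$ of $G - v$. Writing $N := N(v)$ and letting $H_c \subseteq G[N]$ denote the color-$c$ subgraph of $\chi$, every $H_c$ is $K_k$-free. Crucially, for every partition $N = N_1 \sqcup \cdots \sqcup N_r$, some $H_c[N_c]$ must contain $K_{k-1}$; otherwise extending $\chi$ by coloring each edge $vu$ with the color $c$ such that $u \in N_c$ would produce a good coloring of $G$, contradicting $G \to (K_k)_r$.

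Next I would apply the symmetric Lov\'asz Local Lemma to a uniformly random assignment $\phi : N \to [r]$, taking as bad events $A_{K,c} = \{K \subseteq \phi^{-1}(c)\}$ for each $K_{k-1}$-clique $K$ of $H_c$. Each such event has probability $r^{-(k-1)}$, and two bad events are dependent only when the underlying cliques share a vertex. The partition property forbids any valid $\phi$, so the LLL hypothesis must fail: some vertex $x \in N$ is contained in $\Omega(r^{k-1}/k)$ copies of $K_{k-1}$ taken across all $H_c$.

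To upper-bound this clique-count I rely on the Moon--Moser--Zykov clique inequality: since $H_c$ is $K_k$-free, the link $N_{H_c}(x)$ is $K_{k-1}$-free, and therefore contains at most $(d_{H_c}(x)/(k-2))^{k-2}$ copies of $K_{k-2}$, giving an equal upper bound on the number of $K_{k-1}$-cliques of $H_c$ through $x$. Matching the resulting upper bound on $\sum_c (d_{H_c}(x)/(k-2))^{k-2}$, subject to $\sum_c d_{H_c}(x) \leq d - 1$, with the LLL-derived lower bound should then yield $d = \Omega(kr^2)$.

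The main obstacle is the convexity of $t \mapsto t^{k-2}$: when the $H_c$-degrees at $x$ are concentrated in a single color, the crude LLL argument only yields $d = \Omega(kr)$ rather than the desired $\Omega(kr^2)$. To recover the missing factor of $r$ I would exploit additional structure, either by averaging over the $d$ distinct good colorings of $G-v$ obtained from the edges incident to $v$ in order to produce a $\chi$ with balanced color-degrees at every vertex of $N$, or by running a refined two-stage random partition (first randomly grouping colors, then refining within each group) and applying the Local Lemma in each stage.
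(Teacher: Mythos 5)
Your reduction is fine: the structure you extract at a minimum-degree vertex (edge-disjoint $K_k$-free graphs $H_1,\dots,H_r$ on $N(v)$ such that every $r$-partition of $N(v)$ contains a monochromatic $K_{k-1}$ in the matching color) is precisely the lower-bound direction of the identity $s_r(K_k)=P_r(k-1)$ that the paper imports from Fox et al. The genuine gap is the one you name yourself and never close. The LLL step forces some $x\in N(v)$ to lie in $\Omega(r^{k-1}/k)$ copies of $K_{k-1}$ across the colors, and Zykov bounds the count in color $c$ by $(d_c/(k-2))^{k-2}$ with $\sum_c d_c\le d-1$; but since the extremal configuration for this convex constraint concentrates all degree in one color, the only conclusion is $\bigl((d-1)/(k-2)\bigr)^{k-2}=\Omega(r^{k-1}/k)$, i.e.\ $d=\Omega\bigl(k\,r^{1+1/(k-2)}\bigr)$. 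That is $\Omega(r^2)$ only for $k=3$; for growing $k$ it degenerates toward $\Omega(kr)$, a factor of essentially $r$ short of the theorem, which must hold for all $k,r\ge 3$ simultaneously. Neither proposed repair is a proof. Averaging over the $d$ good colorings of the graphs $G-e$ does not yield one good coloring of $G-v$ with balanced color degrees on $N(v)$: good colorings cannot be recombined color class by color class, and nothing in the hypotheses excludes the possibility that every good coloring of $G-v$ is locally very unbalanced (the known near-optimal constructions are exactly of this concentrated type, with all $K_{k-1}$'s through a vertex living inside a few "clique blocks" of one color). The two-stage random partition is unspecified, and it is unclear what it could gain, since the bottleneck is not the local lemma but the fact that $K_k$-freeness alone cannot prevent a single color from carrying $(d/(k-2))^{k-2}$ cliques $K_{k-1}$ through $x$.

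For comparison, the paper does not argue locally at one vertex with a fixed coloring; it proves the bound for $P_r(k)$ by a recursion in the number of colors. An Erd\H{o}s--Rogers-type lemma (proved with the same Zykov count you invoke, combined with random vertex deletion) shows every $K_{k+1}$-free graph on $n$ vertices contains a $K_k$-free induced subgraph on at least $\tfrac12\sqrt{kn}$ vertices; pre-coloring such a subset of the optimal pattern with color $r$ shows the remaining vertices must witness $P_{r-1}(k)$, giving $P_r(k)\ge P_{r-1}(k)+\tfrac12\sqrt{kP_r(k)}$, and induction from $P_2(k)=k^2$ yields $P_r(k)\ge kr^2/16$. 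The recursion gains roughly $\sqrt{k\cdot kr^2}=kr$ per color, which summed over $r$ colors is exactly the quadratic-in-$r$ growth your single-shot counting argument cannot extract; to salvage your approach you would need either a genuinely balanced good coloring (which you have not produced) or a different mechanism that exploits all $r$ colors rather than the worst single one.
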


	\section{On the proof}
	Instead of dealing with minimal Ramsey graphs, the proofs of all the known bounds on $s_r(K_k)$ work with an alternative function, distilled by Fox et al.\ \cite{fox2016minimum} from the original argument of Burr et al.\ \cite{burr1976graphs} for $s_2(K_{k}) =(k-1)^2$. 
	
	\begin{definition}[Color Pattern.]
		A sequence of pairwise edge-disjoint graphs $G_1,\dots,G_r$ on the same vertex set $V$ is called an \textit{$r$-color pattern} on $V$ (where the edges of $G_i$ are said to have color $i$). The color pattern is \textit{$K_{k+1}$-free} if $G_i$ is $K_{k+1}$-free for every $i=1, \ldots r$.\\  
		Given a color pattern $G_1,\dots,G_r$ on the vertex set $V$ and an $r$-coloring $c:V\rightarrow [r]$ of the vertices, a \emph{strongly monochromatic} copy of a graph $H$ according to $c$ is a copy of $H$ whose edges and vertices all have the same color.
	\end{definition}
	
	\begin{definition}
		Let $r,k \geq 2$ be positive integers, we define $P_r(k)$ to be the smallest positive integer $n$ such that there exists a $K_{k+1}$\textit{-free} color pattern $G_1,\dots,G_r$ on the vertex set $[n]$ such that every $r$\textit{-coloring} of $[n]$ induces a strongly monochromatic $K_k$.
	\end{definition}
	The connection between $s_r$ and $P_r$ is summarized in the following lemma.
	\begin{lemma}\cite[Theorem 1.5]{fox2016minimum} \label{lem: ColorPattern}
		For all integers $r,k \geq 2$ we have
		$s_r(K_{k+1})=P_r(k).$
	\end{lemma}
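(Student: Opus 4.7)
The plan is to prove the inequalities $P_r(k) \le s_r(K_{k+1})$ and $s_r(K_{k+1}) \le P_r(k)$ separately. The first is a clean structural extraction from a minimum-degree vertex of a minimal Ramsey graph, whereas the second requires augmenting a witnessing color pattern with signal-sender gadgets.

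For $P_r(k) \le s_r(K_{k+1})$, start with a minimal $r$-Ramsey graph $G$ for $K_{k+1}$ containing a vertex $v$ of degree $n := s_r(K_{k+1})$. For any edge $e$ at $v$, minimality yields a $K_{k+1}$-free $r$-edge-coloring of $G-e$, and its restriction to $E(G-v)$ is a $K_{k+1}$-free $r$-edge-coloring $\psi$ of $G-v$. Defining $G_c := \{xy \in E(G[N(v)]) : \psi(xy) = c\}$ for $c \in [r]$ produces a $K_{k+1}$-free color pattern on $N(v)$. To verify the strongly monochromatic $K_k$ property, fix any vertex coloring $\chi : N(v) \to [r]$ and extend $\psi$ to $G$ by coloring each edge $vu$ with $\chi(u)$. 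Since $G \to (K_{k+1})_r$, the extended coloring contains a monochromatic $K_{k+1}$, which must use $v$ because $\psi$ is $K_{k+1}$-free on $G-v$; if its color is $c_0$ then its other $k$ vertices lie in $\chi^{-1}(c_0)$ and induce a $K_k$ inside $G_{c_0}$---precisely a strongly monochromatic $K_k$.

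For $s_r(K_{k+1}) \le P_r(k)$, take a witnessing color pattern $G_1,\dots,G_r$ on $[n]$ with $n := P_r(k)$ and construct a host graph $G_0$ on $\{v\}\cup[n]$ by joining $v$ to every vertex of $[n]$ and placing the edges of $\bigcup_c G_c$ on $[n]$; then, for each edge $xy \in G_c$, attach a ``color-pinning'' gadget forcing $xy$ to receive color $c$ in every $K_{k+1}$-free $r$-edge-coloring of the whole construction. Such gadgets can be assembled from positive and negative signal senders for $K_{k+1}$, whose existence in any finite number of colors is classical (Burr--Erd\H{o}s--Lov\'asz and Ne\v{s}et\v{r}il--R\"odl). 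In the resulting graph $G$, any $K_{k+1}$-free $r$-edge-coloring must color each edge of $\bigcup_c G_c$ on $[n]$ according to the pattern, while the edges at $v$ are free; these free colors give a vertex coloring $\chi : [n] \to [r]$, whose strongly monochromatic $K_k$ in some color $c_0$ combines with $v$ (and the edges $vu$ for $u$ in that $K_k$, each colored $c_0$ by the definition of $\chi$) into a monochromatic $K_{k+1}$, a contradiction. Hence $G \to (K_{k+1})_r$, and passing to any minimal Ramsey subgraph $G' \subseteq G$ gives $s_r(K_{k+1}) \le \delta(G') \le \deg_{G'}(v) \le \deg_{G_0}(v) = n$.

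The main obstacle is the design and correct attachment of the color-pinning gadgets: they must themselves admit a $K_{k+1}$-free $r$-coloring, must not create spurious monochromatic $K_{k+1}$'s when glued to $G_0$, and, crucially, must not add new edges at $v$, so that $\deg_G(v) = n$ is preserved. Once this gadget machinery is taken as a black box, both directions are essentially structural, and a convenient feature is that we never need $v$ to be a minimum-degree vertex of $G'$: the inclusion $G' \subseteq G$ already forces $\delta(G') \le \deg_G(v) = n$.
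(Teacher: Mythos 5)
Your first inequality ($P_r(k)\le s_r(K_{k+1})$) is correct and is exactly the standard extraction argument; note that the paper itself does not prove this lemma but cites it from Fox, Grinshpun, Liebenau, Person and Szab\'o, and your second direction follows the same signal-sender strategy as that source. However, as written the second direction has two genuine gaps.

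First, the ``color-pinning'' gadget you invoke cannot exist in the literal form you state. The set of $K_{k+1}$-free $r$-edge-colorings of any graph is invariant under permutations of the color set, so no gadget can force a fixed edge to receive one \emph{specific} color $c$ in \emph{every} $K_{k+1}$-free coloring (unless the gadget admits no such coloring at all, which would ruin the construction, since then $G-v$ would already be Ramsey). What the gadget machinery actually gives is relative information: positive signal senders force all edges of a given pattern class to share a color, and negative signal senders force representatives of distinct classes to differ. This only determines the pattern colors up to a global permutation $\sigma$ of $[r]$, which is still sufficient, but your argument must be rephrased accordingly (read the star colors at $v$ through $\sigma^{-1}$ to obtain the vertex coloring $\chi$, and conclude a monochromatic $K_{k+1}$ in color $\sigma(c_0)$).

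Second, your closing claim that ``the inclusion $G'\subseteq G$ already forces $\delta(G')\le\deg_G(v)=n$'' is false: a minimal Ramsey subgraph $G'$ of $G$ could a priori avoid $v$ entirely, in which case $\deg_G(v)$ gives no bound on $\delta(G')$. You must show that $v$ lies in every (or at least one) minimal Ramsey subgraph. The standard way is to exhibit a $K_{k+1}$-free $r$-coloring of $G-v$ (color the pattern edges by the pattern, which is $K_{k+1}$-free by assumption, and color the gadgets by their internal good colorings, using the gluing properties you already listed as design requirements); then $G-v$ is not $r$-Ramsey, so no Ramsey subgraph of $G$ avoids $v$, and for any minimal Ramsey $G'\subseteq G$ one gets $s_r(K_{k+1})\le\delta(G')\le\deg_{G'}(v)\le\deg_G(v)=n$. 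Both issues are repairable with standard arguments, but they are precisely the points where the proof as stated does not go through.
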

	
	%Tibor: It is NOT because we want to ''ease." It is because we have NO OTHER IDEA. 
	%To ease the study of $P_r(k)$, 
	
	To prove an upper bound on $P_r(k)$, one needs to construct a $K_{k+1}$-free $r$-color pattern $G_1, \ldots , G_r$ with the specific property about strongly monochromatic $K_k$. 
	As it happens, at the moment we have no other idea of guaranteeing the existence of a strongly monochromatic $K_k$ in an arbitrary $r$-coloring of the vertices, but requiring that {\em each} of the graphs $G_i$ has a $K_k$ in {\em each} subset of size at least $n/r$ and then use this for the largest color class in $[n]$. 
	To this end, we define for a graph $G$ and a positive integer $k$ the parameter $\alpha_k(G)$ to be the order of the largest $K_k$\textit{-free} induced subgraph of $G$. 
	
	\begin{observation}\cite[Lemma 4.1]{fox2016minimum} \label{obs: ColorPatter}
		If there exists a $K_{k+1}$-free color pattern $G_1, \ldots , G_r$ on $[n]$ such that $\alpha_k(G_i) <\frac{n}{r}$ for every $i=1, \ldots , r$, then 
		$s_r(K_{k+1})=P_k(r) \leq n$.
	\end{observation}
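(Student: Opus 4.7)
The plan is to combine \Cref{lem: ColorPattern} with a one-line pigeonhole argument on the vertex coloring. By \Cref{lem: ColorPattern} it suffices to show $P_r(k) \leq n$, so I need to verify that the given color pattern $G_1,\dots,G_r$ already satisfies the defining property of $P_r(k)$: every $r$-coloring of $[n]$ induces a strongly monochromatic $K_k$.

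So, fix an arbitrary $r$-coloring $c \colon [n]\to [r]$, and let $V_j = c^{-1}(j)$ denote the color classes. By the pigeonhole principle there exists an index $j \in [r]$ with $|V_j| \geq n/r$. By assumption $\alpha_k(G_j) < n/r \leq |V_j|$, so the induced subgraph $G_j[V_j]$ is \emph{not} $K_k$-free, i.e.\ it contains a copy of $K_k$. Every vertex of this $K_k$ lies in $V_j$ (hence has vertex color $j$), and every edge of this $K_k$ lies in $G_j$ (hence has edge color $j$). This is precisely a strongly monochromatic $K_k$ according to $c$.

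Since this works for every $r$-coloring $c$, the color pattern $G_1,\dots,G_r$ witnesses $P_r(k) \leq n$, and \Cref{lem: ColorPattern} then yields $s_r(K_{k+1}) = P_r(k) \leq n$, completing the proof.

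There is essentially no obstacle here: the observation is a packaging of the pigeonhole step, and the content of the definition of $\alpha_k$ and of a strongly monochromatic copy are set up precisely so that this argument goes through verbatim. The only minor point to keep track of is that the hypothesis $\alpha_k(G_i) < n/r$ is a strict inequality, which is exactly what is needed to guarantee that a color class of size at least $n/r$ forces a $K_k$ in the corresponding $G_i$.
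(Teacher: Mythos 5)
Your proof is correct and follows exactly the route the paper has in mind: apply the pigeonhole principle to the largest vertex color class (of size at least $n/r$), use the strict bound $\alpha_k(G_j) < n/r$ to force a copy of $K_k$ in $G_j$ inside that class, and then invoke \Cref{lem: ColorPattern} to transfer $P_r(k) \leq n$ to $s_r(K_{k+1}) \leq n$. The paper states this observation with a citation rather than a written-out proof, but the argument it alludes to (using the largest color class) is precisely yours, so there is nothing to add.
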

	
	We will also follow this road and construct $K_{k+1}$-free $r$-color patterns on $[n]$ with $\alpha_k$-values less than $n/r$. 
	Before constructing $r$ edge-disjoint $K_{k+1}$-free graphs with $\alpha_k < n/r$ however, one better deals with the ''simpler'' problem of constructing just one. 
	This is exactly the task of the well-studied Erd\H os-Rogers function $f_{k,k+1}(n)$ which asks for the smallest value of $\alpha_k(G)$ of $K_{k+1}$-free graphs $G$ on $n$ vertices. 
	Given a good Erd\H os-Rogers graph, one then ''only'' has to pack as many of them as possible on $n$ vertices.  
	Indeed, Fox et al.\ ~\cite[Conjecture 5.2]{fox2016minimum} even predicted that for every fixed $k\geq 3$ we will have $P_r(k) = \Theta (r \cdot (f_{k,k+1}(r))^2)$. Considering the recent improvements of Mubayi and Verstraete~\cite{mubayi2024order} on the Erd\H os-Rogers function, Theorem~\ref{thm: MinDegConst} comes within a log-factor of resolving this conjecture.
%will also follow this road and construct $K_{k+1}$-free $r$-color patterns on $[n]$ with $\alpha_k$-values less than $n/r$. 
%	A ''simpler'' problem closely related to this is the following: For which $n$ can we construct only a single graph $G_1$ with $\alpha_k(G_1) < n/r$? This is exactly the problem of the well-studied Erd\H os-Rogers function $f_{k,k+1}(n)$ which asks for the smallest value of $\alpha_k(G)$ of $K_k$-free graphs on $[n]$. 
%	Given a good Erd\H os-Rogers graph, one ''only'' has to tile a large fraction of the complete graph with it. Indeed, Fox et al.\ ~\cite[Conjecture 5.2]{fox2016minimum} even predicted that for every fixed $k\geq 3$ we will have $s_r(K_k) = \Theta (r \cdot (f_{k-1,k}(r))^2)$. Considering the recent improvements of Mubayi and Verstraete~\cite{mubayi2024order} on the Erd\H os-Rogers function, Theorem~\ref{thm: MinDegConst} comes within a log-factor of resolving this. 
	Good constructions for the Erd\H os-Rogers function will be extremely useful for us as well, but creating color patterns using them requires additional ideas. 
    
%	The generic Erd\H os-Rodgers construction starts with an appropriate linear hypergraph on $n$ vertices and creates a graph by dropping a random $K_{k+1}$-free graph on each of its hyperedges. 
%        This random choice has to balance that no $K_{k+1}$ is created from the edges coming from within different hyperedges, but there are enough edges so that any $n/r$-subset of the vertices contains a $K_k$. 
	
	The general approach to construct the desired color patterns is to start with an ''appropriate'' $u$-uniform linear hypergraph on $[n]$ with essentially as many hyperedges as possible, that is, in the order $\frac{n^2}{u^2}$. Then one assigns one of $r$ colors to each hyperedge ''appropriately'', to indicate which color pairs of vertices inside the hyperedge will receive should they be chosen to be an edge at all. Here the linearity of the hypergraph plays a crucial role: every pair of vertices belongs to (at most) one hyperedge. Finally, one constructs the graphs $G_i$ by dropping an ''appropriately'' random $k$-partite graph within each hyperedge of assigned color $i$, where the choices for different hyperedges are usually independent. 
    This random choice must balance that no $K_{k+1}$ is created from the edges coming from within different hyperedges, yet there are enough edges so that any $n/r$-subset of the vertices contains a $K_k$. The crux of the matter is how to define the various occurrences of "appropriate" above so that they complement each other well.
	
	In the construction of Fox et al.\ ~\cite{fox2016minimum} for (\ref{eq:fox-constantk}) (crucially making use of the Erd\H os-Rogers construction of Dudek, Retter, and R\"odl~\cite{dudek2014generalized})
	and that of H\`an et al.\ ~\cite{han} for (\ref{eq:han-general}) the linear hypergraph is essentially given by the lines of an (affine or projective) plane of order $q$ and the ''appropriate'' color assignment chosen randomly. In the constructions of Fox et al.\ ~\cite{fox2016minimum} for (\ref{eq:fox-polynomial}) and of Bamberg et al.\ ~\cite{bamberg2022minimum} for (\ref{eq:bamberg}) the linear hypergraph is given by some (pseudo)lines in a higher dimensional space and the color-assignment is defined algebraically. The point of these assignments is to ensure that the hypergraph of each color class is {\em triangle-free}, hence the $K_{k+1}$-freeness of each $G_i$ will be automatic once the graphs inside the hyperedges are $K_{k+1}$-free.
	
	In our construction we also start with the projective plane, working in the dual setup, so the lines will correspond to vertices and the vertices correspond to the hyperedges. We choose the order to be $q^2$, so we are able to make use of Hermitian unitals and its beneficial algebraic and combinatorial properties. One of the main ideas of our construction is to combine the probabilistic color assignment to the hyperedges with an appropriate algebraic one. Unlike in \cite{fox2016minimum} and \cite{bamberg2022minimum}, our algebraic color assignment will not guarantee immediate $K_{k+1}$-freeness, but will however ensure that the analysis of $K_{k+1}$-freeness will only have to consider very limited types of forbidden events. The random part of the color assignment then helps to limit the number of bad events within those types. 

    Hermitian unitals, which dictate the rigid structures of our monochromatic $K_{k+1}$'s, have been instrumental in recent results in Ramsey theory due to the second and fourth authors \cite{mattheus2024asymptotics}, Erd\H{o}s-Rogers functions due to Mubayi and the fourth author \cite{mubayi2024order}, recent results on the generalization of this function studied by Balogh, Chen and Luo \cite{balogh2025maximum} and Mubayi and the fourth author \cite{mubayi2024erd}, and improvements for the Erd\H{o}s-Rogers function $f_{k,k+2}(n)$ by Janzer and Sudakov \cite{janzer2025improved}. \\
	
	%The main novelty compared to the aforementioned applications is that we have to use multiple disjoint Hermitian unitals to reach the desired number of colors. 
    The organization of the paper is the following. In \Cref{sec:pencilunitals} we discuss the algebraic content of our color assignment, which is based on the use of multiple disjoint Hermitian unitals. 
	The probabilistic refining of the algebraic coloring is the subject of \Cref{Coloring II: Probability}. In \Cref{sec:graphs}, we complete the proofs of Theorem~\ref{thm: MinDegGen} and \ref{thm: MinDegConst} by arguing that our graphs are indeed likely to be $K_{k+1}$-free while maintaining a small $\alpha_k$. The proof of our lower bound in Theorem~\ref{thm: LowerKk+1} is given in \Cref{sec:lowerbounds}, while Section~\ref{section: Tran} contains the short proof of Theorem~\ref{thm:semisaturation}. Section~\ref{sec:remarks} collects a number of probelms remaining open.

	\section{Coloring I: Finite Geometry}\label{sec:pencilunitals}
	We start by describing a classical technique to partition the points of the projective plane $\PG(2,q^2)$ based on a pencil of $q$ Hermitian unitals sharing a common tangent line. Recall that a Hermitian unital $U$ is defined by a non-singular Hermitian matrix $A$ over $\mathbb{F}_{q^2}$, i.e. $A^q = A^\top$, so that $H:x^\top A x^q = 0$. For example, the matrix 
	\[A = \begin{pmatrix} 1 & 0 & 0 \\ 0 & 0 & 1 \\ 0 & 1 & 0 \end{pmatrix}\]
	defines the Hermitian unital with equation $X^{q+1}+YZ^q+Y^qZ=0$. It is well-known, see for example \cite{BarwickEbert}, that every line intersects a Hermitian unital in either $1$ or $q+1$ points. We will say that a line is \textit{tangent} or \textit{secant} respectively. Some more combinatorial properties we need are the following. We refer to the book by Barwick and Ebert \cite{BarwickEbert} for these and many more properties of Hermitian unitals.
	
	\begin{lemma}\label{lem:unitalcombinatorics}
		Let $U$ be a Hermitian unital in $\PG(2,q^2)$, then we have the following properties:
		\begin{enumerate}
			\item $|U| = q^3+1$,
			\item there are $q^4-q^3+q^2$ secant and $q^3+1$ tangent lines to $U$,
			\item each point on the unital is incident to a unique tangent line and $q^2$ secant lines, and 
			\item \emph{~\cite{mubayi2024order}} for every $k\geq 3$ secants pairwise intersecting in $U$, there exists a point in $U$ incident to at least $k-1$ of them.
		\end{enumerate}
	\end{lemma}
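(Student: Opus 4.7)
The plan is to handle the four assertions separately, relying on standard double-counting arguments for (1)--(3) and a citation to \cite{mubayi2024order} for (4).

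For (1), I would work in the affine chart $Z=1$ with the standard Hermitian equation $X^{q+1} + YZ^q + Y^qZ = 0$, which becomes $Y + Y^q = -X^{q+1}$. Since $X^{q+1}$ lies in $\mathbb{F}_q$ and the trace map $Y \mapsto Y + Y^q$ from $\mathbb{F}_{q^2}$ onto $\mathbb{F}_q$ is $\mathbb{F}_q$-linear and surjective with kernel of size $q$, each $X \in \mathbb{F}_{q^2}$ contributes exactly $q$ values of $Y$. Adding the single point at infinity where $Z=0$ yields $q^3 + 1$ points in total.

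For (2) and (3), the starting point is the classical fact, recorded in \cite{BarwickEbert}, that every line meets $U$ in either $1$ or $q+1$ points. Then (2) follows by double counting pairs of points on $U$: the number of secants equals $\binom{|U|}{2}/\binom{q+1}{2} = q^4 - q^3 + q^2$, and the remaining $q^3 + 1$ of the $q^4+q^2+1$ lines in $\PG(2,q^2)$ must be tangents. For (3), I would fix $P \in U$ and partition the other $q^3$ points of $U$ according to the line through $P$ on which they lie; this gives exactly $q^2$ secants through $P$, and the last of the $q^2 + 1$ lines through $P$ must then be the unique tangent at $P$.

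The fourth assertion is the substantive one, and the main obstacle would be to produce a short self-contained proof. Fortunately, it is already established by Mubayi and Verstraete in \cite{mubayi2024order}, where the argument exploits the tangent-secant incidence structure at shared points of $U$ together with the regularity established in (3). The plan is therefore to simply cite their result.
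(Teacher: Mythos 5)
Your proposal is correct, and the comparison with the paper is slightly unusual: the paper gives no proof of this lemma at all, deferring items (1)--(3) to the book of Barwick and Ebert \cite{BarwickEbert} and item (4) to \cite{mubayi2024order}, exactly as you do for (4). So your route differs only in that you actually write out the standard counting arguments for (1)--(3), which is perfectly fine and makes the lemma self-contained modulo the classical tangent/secant dichotomy (every line meets $U$ in $1$ or $q+1$ points), which you, like the paper, still import from \cite{BarwickEbert}. Your computations check out: the affine chart argument for (1) gives $q^2\cdot q$ affine points plus the unique point $(0,1,0)$ at infinity; the pair count $\binom{q^3+1}{2}/\binom{q+1}{2}=q^4-q^3+q^2$ for the secants and the complement count $q^4+q^2+1-(q^4-q^3+q^2)=q^3+1$ for the tangents are right; and partitioning $U\setminus\{P\}$ into fibres of size $q$ over the $q^2+1$ lines through $P$ gives exactly $q^2$ secants and one remaining line, which is forced to be the tangent at $P$. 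The only cosmetic point worth adding is that for (1) you compute with the canonical equation $X^{q+1}+YZ^q+Y^qZ=0$, so you should note (as is standard) that every non-degenerate Hermitian curve in $\PG(2,q^2)$ is projectively equivalent to this one, so the count is independent of the choice of Hermitian matrix; with that remark your write-up buys a self-contained verification where the paper simply cites the literature.
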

	
	The idea is that given any Hermitian unital $U$ and a tangent line $\ell_\infty$ at the point $p_\infty$, we can consider the defining equations of both and define (with some abuse of notation) the pencil 
	\[\cU = \{U_\lambda:U+\lambda\cdot(\ell_\infty)^{q+1} = 0 \,\, | \,\, \lambda \in \Fq\}\] 
	We will see that each $\lambda \in \Fq$ defines a unital $U_\lambda$, where $U_0 = U$. Moreover, an elementary calculation will show that every line in $\PG(2,q^2)$ not through $p_\infty$ is tangent to exactly one unital in $\cU$ and secant to the $q-1$ others.
	
	These properties can be derived purely geometrically, but for the sake of concreteness, we will use coordinates. So consider $U:X^{q+1}+YZ^q+Y^qZ=0$, then it is easy to check that $Z = 0$ is a tangent line at the point $(0,1,0)$. We will denote them as $\ell_\infty$ and $p_\infty$.
	
	\begin{lemma}\label{lem:unitalpencil}
		Consider the set $\cU = \{U_\lambda\}_{\lambda \in \Fq}$ of unitals in $\PG(2,q^2)$ defined by 
		\[U_\lambda:X^{q+1}+YZ^q+Y^qZ+\lambda Z^{q+1} = 0.\]
		Then 
		\begin{enumerate}
			\item $\bigcup_{\lambda \in \Fq}\left(U_\lambda \setminus p_\infty\right) \cup \ell_\infty$ is a partition of the points of $\PG(2,q^2)$.
			\item Every line in $\PG(2,q^2)$ not through $p_\infty$ is tangent to exactly one unital and secant to all others.
		\end{enumerate}
	\end{lemma}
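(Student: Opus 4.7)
The plan is to verify both statements by directly exploiting the fact that $a^{q+1}$ and $b+b^q$ lie in $\Fq$ for any $a,b \in \mathbb{F}_{q^2}$, so that the equation of $U_\lambda$ already takes values in $\Fq$ at any point of $\PG(2,q^2)$.

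First I would briefly justify that each $U_\lambda$ is genuinely a Hermitian unital: the defining equation corresponds to the matrix $A_\lambda = \begin{pmatrix} 1 & 0 & 0 \\ 0 & 0 & 1 \\ 0 & 1 & \lambda \end{pmatrix}$, which, since $\lambda \in \Fq$ satisfies $\lambda^q = \lambda$, is Hermitian and of non-zero determinant. Hence \Cref{lem:unitalcombinatorics} applies to each $U_\lambda$.

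For part (1), I would pick an arbitrary point $P = (a,b,c) \in \PG(2,q^2)$ and split on whether $c = 0$. If $c \neq 0$, rescale to $c = 1$; then $P$ lies on $U_\lambda$ precisely when $\lambda = -(a^{q+1} + b + b^q)$. The right-hand side lies in $\Fq$ (being the sum of a norm and a trace), so there is exactly one such $\lambda$, showing that $P$ belongs to exactly one $U_\lambda$. If $c = 0$, the equation collapses to $a^{q+1} = 0$, forcing $a = 0$ and hence $P = p_\infty$; thus no point of $\ell_\infty$ other than $p_\infty$ is on any unital, while $p_\infty$ is on all of them. This delivers the claimed partition (and a quick count $q \cdot q^3 + (q^2 + 1) = q^4 + q^2 + 1$ confirms consistency).

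For part (2), let $\ell$ be a line not through $p_\infty$. Since $\ell \ne \ell_\infty$, it meets $\ell_\infty$ in a single point, which by part (1) lies on no $U_\lambda$. The remaining $q^2$ points of $\ell$ are therefore partitioned among the unitals $U_\lambda$, $\lambda \in \Fq$. By \Cref{lem:unitalcombinatorics}(2), each intersection $\ell \cap U_\lambda$ has size $1$ or $q+1$. Letting $t$ denote the number of $\lambda$'s for which $\ell$ is tangent to $U_\lambda$, the partition yields
\[ t \cdot 1 + (q-t)(q+1) = q^2, \]
which simplifies to $t = 1$. Hence $\ell$ is tangent to exactly one unital in $\cU$ and secant to the remaining $q-1$, as claimed.

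The only subtlety is the very first step, recognizing that the coefficient functions $a^{q+1}$, $b+b^q$, $c^{q+1}$ all take values in $\Fq$, so that varying $\lambda \in \Fq$ is ``just enough'' freedom to hit every point of $\PG(2,q^2) \setminus \ell_\infty$ exactly once; everything else is a straightforward incidence count.
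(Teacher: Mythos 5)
Your proof is correct and follows essentially the same route as the paper's: verify Hermitian-ness via the matrix $A_\lambda$, use that $x^{q+1}+yz^q+y^qz \in \Fq$ and $z^{q+1}\in\Fq^\times$ to solve uniquely for $\lambda$, and then the same $t\cdot 1 + (q-t)(q+1)=q^2$ count for part (2). The only trivial cosmetic difference is normalizing $c=1$ rather than working with the general equation $a+\lambda b=0$, and the appeal to \Cref{lem:unitalcombinatorics}(2) for the line-intersection sizes should instead reference the preceding text (that every line meets a Hermitian unital in $1$ or $q+1$ points), since item (2) of that lemma records the counts of tangent and secant lines rather than the intersection sizes themselves.
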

	
	\begin{proof}
		Every $U_\lambda$ is defined by the non-singular Hermitian matrix
		\[\begin{pmatrix}
			1 & 0 & 0 \\ 0 & 0 & 1 \\ 0 & 1 & \lambda
		\end{pmatrix}\]
		and is hence a Hermitian unital.
		
		Observe that $p_\infty \in U_\lambda$ for all $\lambda \in \Fq$ and this is the only common point of any two unitals in the pencil. Now given a point not on $\ell_\infty$ with homogeneous coordinates $(x,y,z)$, so that $z \neq 0$, it is clear that both $a:=x^{q+1}+yz^{q}+y^qz$ and $b:=z^{q+1} \neq 0$ are elements of $\Fq$, so that there is exactly one solution in $\Fq$ to the equation $a+\lambda b = 0$. This implies that every point not on $\ell_\infty$ is contained in exactly one unital of the pencil.
		
		Finally, any line $\ell$ not through $p_\infty$ intersects every $U_\lambda$ in either $1$ or $q+1$ as each of them is a Hermitian unital. So let $t$ and $s$ be the number of times that each case occurs. Since there are $q^2$ points on $\ell$ not on $\ell_\infty$, we see
		\begin{align}
			\begin{cases}
				t+s=q \\
				t+s(q+1) = q^2,
			\end{cases}
		\end{align}
		and hence $t = 1$, $s = q-1$.
	\end{proof}
	
	\begin{corollary}\label{lem:countcommonsecants}
		Let $\Lambda \subset \Fq$, then there are $q^4-|\Lambda|q^3+q^2$ common secants to the set of unitals $\{U_\lambda\}_{\lambda \in \Lambda}$.
	\end{corollary}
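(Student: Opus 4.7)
The plan is to classify the lines of $\PG(2,q^2)$ into three groups according to their relationship with $p_\infty$, count common secants within each group separately, and then add up.

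First, I would observe that the total number of lines in $\PG(2,q^2)$ is $q^4+q^2+1$, of which $q^2+1$ pass through $p_\infty$, leaving $q^4$ lines that avoid $p_\infty$. The special line $\ell_\infty$ is tangent to every $U_\lambda$ (as observed already), so it is never a common secant and contributes $0$. The remaining two groups are the $q^2$ lines through $p_\infty$ different from $\ell_\infty$, and the $q^4$ lines avoiding $p_\infty$.

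For lines avoiding $p_\infty$: by \Cref{lem:unitalpencil}(2), each such line is tangent to exactly one unital in the pencil and secant to the other $q-1$. Hence a line avoiding $p_\infty$ is a common secant to $\{U_\lambda\}_{\lambda \in \Lambda}$ if and only if its unique tangent unital has index outside of $\Lambda$. These $q^4$ lines therefore split into $q$ disjoint classes indexed by $\lambda \in \Fq$; since $U_\lambda$ has $q^3+1$ tangent lines in total (\Cref{lem:unitalcombinatorics}(2)) and $\ell_\infty$ is the unique tangent through $p_\infty$, each class has exactly $q^3$ members. Thus the number of common secants among these lines is $q^4-|\Lambda|q^3$.

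The main step, and the only subtle one, is handling the $q^2$ lines through $p_\infty$ different from $\ell_\infty$. I claim that every such line $\ell$ is secant to \emph{every} $U_\lambda$, and hence contributes its full $q^2$ to the count for any $\Lambda$. To see this, note that $\ell$ contains $q^2+1$ points, one of which is $p_\infty \in \ell_\infty$, so the remaining $q^2$ points lie off $\ell_\infty$. By \Cref{lem:unitalpencil}(1), these $q^2$ points are distributed among the sets $U_\lambda \setminus \{p_\infty\}$ as $\lambda$ ranges over $\Fq$. Since every line meets a Hermitian unital in $1$ or $q+1$ points, each $\ell \cap (U_\lambda \setminus \{p_\infty\})$ has size $0$ or $q$. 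The counts must sum to $q^2$ across the $q$ unitals, forcing each contribution to equal $q$. Hence $\ell$ is secant to every $U_\lambda$, and all $q^2$ such lines count as common secants.

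Adding the three contributions gives $0+(q^4-|\Lambda|q^3)+q^2$, which matches the claimed formula. The key observation making the whole argument work is the rigid pigeonhole in the third paragraph, which converts the pointwise partition of \Cref{lem:unitalpencil}(1) into simultaneous secancy for non-$\ell_\infty$ lines through $p_\infty$.
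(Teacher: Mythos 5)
Your proof is correct and takes essentially the same route as the paper: both counts rest on \Cref{lem:unitalcombinatorics} (each unital has $q^3+1$ tangent lines, and every line is tangent or secant) combined with \Cref{lem:unitalpencil}(2). The only difference is bookkeeping: the paper counts the complement in one step, noting that the tangent pencils $\cT_\lambda$ pairwise meet only in $\ell_\infty$, so $\bigl|\bigcup_{\lambda\in\Lambda}\cT_\lambda\bigr| = |\Lambda|q^3+1$ and the number of common secants is $(q^4+q^2+1)-(|\Lambda|q^3+1)$, whereas you count secants directly and make explicit, via the pigeonhole argument on the partition of \Cref{lem:unitalpencil}(1), that every line through $p_\infty$ other than $\ell_\infty$ is secant to all $q$ unitals --- a fact the paper leaves implicit in the assertion $\cT_\lambda\cap\cT_{\lambda'}=\{\ell_\infty\}$.
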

	\begin{proof}
		Denote by $\cT_\lambda$ the set of tangent lines to $U_\lambda$. By \Cref{lem:unitalpencil} we see that for distinct $\lambda,\lambda' \in \Lambda$ we have $\cT_\lambda \cap \cT_{\lambda'} = \{\ell_\infty\}$. We know that $|\cT_\lambda| = q^3+1$ by \Cref{lem:unitalcombinatorics} and hence $|\cup_{\lambda \in \Lambda}\cT_\lambda| = |\Lambda|q^3+1$ so that there are $(q^4+q^2+1)-(|\Lambda|q^3+1)$ lines which are secant to every $U_\lambda$, $\lambda \in \Lambda$.
	\end{proof}
	We hereby fix for every prime power $q$ an arbitrary subset $\Lambda:= \Lambda(q) \subset \mathbb{F}_q$ of size $\lfloor \frac{q}{2}\rfloor$.  We denote by $P=P(q):= \underset{\lambda \in \Lambda}{\cup} U_\lambda -\{p_\infty\}$ the union of the unitals indexed by $\Lambda$ except for $p_\infty$, by $L=L(q)$ the set of their common secants, and by $\mathcal{P}=\mathcal{P}(q):= \{U_\lambda\}_{\lambda \in \Lambda}$ the $\Lambda$\textit{-restricted} pencil. Lemma \ref{lem:countcommonsecants} and \Cref{lem:countcommonsecants} assert that
	
	\begin{equation*}
		q^4/2-q^3\leq |\Lambda|q^3 = |P| \leq q^4/2 \quad \text {     and       } \quad q^4/2 \leq |L| \leq q^4.
	\end{equation*}
	Later on we will construct  $K_{k+1}$\textit{-free} graphs using the disjoint unitals $\{U_\lambda\}_\lambda$ of $\mathcal{P}$. Before we do so, we need to partition each single unital, only this time the partition is done probabilistically.

	\section{Coloring II: Probability} \label{Coloring II: Probability}
	The starting point here is the $\Lambda$\textit{-restricted} pencil $\mathcal{P}$. Inside each $U_\lambda$, we color its points uniformly at random with $c$ colors and repeat this for all $U_\lambda$, $\lambda \in \Lambda$, using a different set of $c$ colors for each unital. In this way we obtain a coloring of $P$ with $|\Lambda|c$ colors, which is well-defined since the unitals $\{U_\lambda\}_{\lambda \in \Lambda}$ are disjoint, except for the point $p_{\infty}$ which we exclude permanently. For each color $i$, let $P_i$ be the set of points with color $i$. We first establish a simple lemma for which we need the Chernoff bound.
	
	\begin{proposition}[Chernoff bound]
		Let $Z$ be a binomial random variable with mean $\mu$. Then for any real $\varepsilon \in [0,1]$,
		\begin{align*}
			&\mathrm{Pr}(Z > (1+\varepsilon)\mu) \leq \exp\left(\frac{-\varepsilon^2\mu}{4}\right) \,\,\,\text{ and } \\
			&\mathrm{Pr}(Z < (1-\varepsilon)\mu) \leq \exp\left(\frac{-\varepsilon^2\mu}{2}\right).
		\end{align*}
	\end{proposition}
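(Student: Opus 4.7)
The Chernoff bound is classical, so the plan is simply to execute the standard exponential moment (Chernoff--Markov) argument. Writing $Z = \sum_{i=1}^n X_i$ as a sum of i.i.d.\ Bernoulli$(p)$ random variables with $\mu = np$, I would first apply Markov's inequality to the nonnegative random variable $e^{tZ}$ for a free parameter $t > 0$, which gives
\[
\mathrm{Pr}(Z > (1+\varepsilon)\mu) \;\leq\; e^{-t(1+\varepsilon)\mu}\,\mathbb{E}[e^{tZ}].
\]
By independence and the elementary estimate $1+x \leq e^x$, the moment generating function factorizes as $\mathbb{E}[e^{tZ}] = (1-p+pe^t)^n \leq \exp(\mu(e^t-1))$, which is the only place where the binomial structure is used.

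Next, I would optimize over $t$ via the natural choice $t = \ln(1+\varepsilon)$, producing the standard form $\mathrm{Pr}(Z > (1+\varepsilon)\mu) \leq \bigl(e^\varepsilon/(1+\varepsilon)^{1+\varepsilon}\bigr)^\mu$. To convert this into the stated subgaussian-type bound $\exp(-\varepsilon^2\mu/4)$, the remaining task is the one-variable inequality $(1+\varepsilon)\ln(1+\varepsilon) - \varepsilon \geq \varepsilon^2/4$ for $\varepsilon \in [0,1]$. Setting $f(\varepsilon) = (1+\varepsilon)\ln(1+\varepsilon) - \varepsilon - \varepsilon^2/4$, one checks $f(0) = f'(0) = 0$ and $f''(\varepsilon) = 1/(1+\varepsilon) - 1/2 \geq 0$ on $[0,1]$, so $f \geq 0$ throughout the interval.

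For the lower tail, the identical template applies with Markov's inequality used on $e^{-tZ}$ for $t > 0$; the optimizer becomes $t = -\ln(1-\varepsilon) > 0$, and the problem reduces to the analogous inequality $(1-\varepsilon)\ln(1-\varepsilon) + \varepsilon \geq \varepsilon^2/2$ for $\varepsilon \in [0,1]$. This follows from the parallel second-derivative test $g''(\varepsilon) = 1/(1-\varepsilon) - 1 \geq 0$. There is no real obstacle beyond these routine calculus checks; the asymmetry between the two constants $1/4$ and $1/2$ simply reflects that $-\ln(1-\varepsilon)$ grows faster than $\ln(1+\varepsilon)$, making the lower tail the easier of the two to bound.
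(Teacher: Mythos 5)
Your argument is correct: it is the standard exponential-moment (Chernoff--Markov) derivation, and the two calculus facts you reduce to, $(1+\varepsilon)\ln(1+\varepsilon)-\varepsilon \geq \varepsilon^2/4$ and $(1-\varepsilon)\ln(1-\varepsilon)+\varepsilon \geq \varepsilon^2/2$ on $[0,1]$, do follow from the second-derivative checks you give (the strict-inequality events and the endpoint $\varepsilon=1$ of the lower tail are handled trivially). The paper states this proposition as a classical fact without proof, so there is no in-paper argument to compare against; your proof is the canonical one and fills the gap correctly.
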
 
	
	\begin{lemma}\label{lem:coloringpoints}
		For any integer $c \leq q/(48\log q)$, there exists a $|\Lambda|c$\textit{-coloring} of $P$ such that for all colors $i$ the following holds.
		\begin{center}
			\begin{tabular}{lp{5in}}
				$(1)$ & The set $P_i$ has size at most $2 q^3/c$ and at least $q^3/2c$. \\
				$(2)$ & Every line in $L$ contains at least $q/2c$ and at most $2q/c$ points of $P_i$. \\
			\end{tabular}
		\end{center}
	\end{lemma}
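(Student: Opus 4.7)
The natural strategy is a straightforward probabilistic argument: color the points independently and uniformly at random, then use a Chernoff bound combined with a union bound. Specifically, for each $\lambda \in \Lambda$, assign an independent and uniformly random color from a private palette of $c$ colors to every point of $U_\lambda \setminus \{p_\infty\}$. Since the $U_\lambda$ with $\lambda\in\Lambda$ are pairwise disjoint off of $p_\infty$, this produces a well-defined coloring of $P$ using $|\Lambda|c$ colors. Each color $i$ is tied to a unique unital $U_{\lambda(i)}$, so $P_i \subseteq U_{\lambda(i)}$.

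Next I analyze each color class. Since $|U_\lambda \setminus \{p_\infty\}| = q^3$, the random variable $|P_i|$ is binomial with mean $q^3/c$. For each common secant $\ell \in L$, \Cref{lem:unitalcombinatorics} gives $|\ell \cap U_{\lambda(i)}| \in \{q,q+1\}$ depending on whether $\ell$ passes through $p_\infty$, so $|\ell \cap P_i|$ is binomial with mean between $q/c$ and $(q+1)/c$. The hypothesis $c \le q/(48\log q)$ guarantees that both means are at least $48\log q$, which is precisely the level of concentration we need. Applying the Chernoff bound with deviation $\varepsilon = 1/2$ for the lower tail and $\varepsilon$ close to $1$ for the upper tail yields
\[
\Pr\!\bigl(|\ell \cap P_i| \notin [q/(2c),\,2q/c]\bigr) \le 2\exp\!\bigl(-q/(8c)\bigr) \le 2q^{-6},
\]
and an even smaller bound for $|P_i|$ falling outside $[q^3/(2c),\,2q^3/c]$, by the same computation with $q$ replaced by $q^3$.

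Finally I take a union bound. The number of (color, line) pairs is at most $|\Lambda|c\cdot|L| \le (q/2)\cdot(q/(48\log q))\cdot q^4 = q^6/(96\log q)$, so the probability that some pair violates condition $(2)$ is at most $2q^{-6}\cdot q^6/(96\log q) = 1/(48\log q) < 1/2$ for $q\ge 2$. The failure probability for condition $(1)$, summed over at most $|\Lambda|c \le q^2$ colors, is dwarfed by this. Hence a random coloring succeeds with positive probability, yielding the desired deterministic coloring. The only subtlety is checking that the Chernoff exponent just barely beats the crude $q^6$-size union bound; the condition $c \le q/(48\log q)$ is calibrated exactly for this, and no further delicacy (such as the Lovász Local Lemma) is required.
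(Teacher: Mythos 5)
Your proposal is correct and follows essentially the same route as the paper: the same random coloring (independent uniform colors within each unital, private palettes), the same Chernoff bounds with means $q^3/c$ for the color classes and between $q/c$ and $(q+1)/c$ for line intersections, and the same union bound over colors and lines calibrated by the hypothesis $c \leq q/(48\log q)$. No gaps.
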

	
	\begin{proof}
		Suppose that $P_i \subseteq U_\lambda$ for some $\lambda \in \Lambda$. The probability that a given point of $U_\lambda \setminus \{p_\infty\}$ receives color $i$ is exactly $1/c$, independently of the other points. Let $A_i$ be the event that $|P_i| \geq 2q^3/c$ or $|P_i| \leq q^3/2c$. The Chernoff bound then shows with $\varepsilon = 1$ and $\varepsilon = 1/2$ respectively that
		\[\mathrm{Pr}(A_i) = \mathrm{Pr}(|P_i| \geq 2q^3/c) + \mathrm{Pr}(|P_i| \leq q^3/2c) \leq \exp\left(-\frac{q^3}{4c}\right) + \exp\left(-\frac{q^3}{8c}\right) \leq 2\exp\left(-\frac{q^3}{4c}\right).\]
		It follows by the union bound that the probability of the event $\cup_i A_i$ is at most
		\[c|\Lambda|\left(2\exp\left(-\frac{q^3}{4c}\right)\right) \leq \frac{q^2}{48\log q}\exp\left(-\frac{12q^2}{\log q}\right) < \frac{1}{2}, \]
		and hence with probability more than $1/2$, none of the events $A_i$ occur.
		
		Similarly, given a line $\ell \in L$ and a color $i$, we find that
		\begin{align}
			\mathrm{Pr}(|\ell \cap P_i| < q/2c) &\leq \exp\left(-\frac{q}{8c}\right) \\
			\mathrm{Pr}(|\ell \cap P_i| > 2q/c) &\leq \exp\left(-\frac{(q-1)^2(q+1)}{4c(q+1)^2}  \right) \leq \exp\left(-\frac{q}{8c} \right).
		\end{align}
		%\red{where does the expression in the second line come from, is it not just $\mu = (q+1)/c$? as $P_i \subset U_\lambda$ for some $\lambda \in \Lambda$ and hence $\ell$ intersects it in $q+1$ points.}
        %\textcolor{blue}{YA: It was ugly and inaccurate, now I changed it so that it's just ugly. I write the bound in terms of $q/c$ simply because I dont want to write $(q+1)/c$ for the remainder of the paper.}
        %\red{The latter I definitely agree with.}

		We now use the union bound to conclude that the probability there exists a line in $L$ whose intersection with some color class is abnormal is at most
		\[2|L|\cdot c|\Lambda|\cdot \exp\left(-\frac{q}{8c}\right) \leq \frac{q^6}{48\log q}\exp(-6\log q) < \frac{1}{2},\]
		and therefore, with non-zero probability, there is a coloring which satisfies the required properties.
	\end{proof}
	
	For every $c \leq q/(48\log q)$ we fix a coloring as guaranteed by the preceding lemma. We then define for each color $i$ of the $|\Lambda|c$ colors the graph $\tilde{G}_i$ to be the graph whose vertex set is $L$ and whose edge set consists of pairs $\{\ell,\ell'\}$ such that $\varnothing \neq\ell \cap \ell' \subset P_i$. The graphs $\{\tilde{G}_i\}_i$ certify the following lemma:
	\begin{lemma} \label{lem: graphlemma}
		Let $q$ be a prime power, $c\leq \frac{q}{48\log q}$ be an integer, and $m :=c\lfloor\frac{q}{2}\rfloor$. There exists an integer $q^4/2\leq n \leq q^4$ and edge-disjoint graphs $\tilde{G}_1,\dots,\tilde{G}_m$ on the vertex set $[n]=L$ where the following  is true for every $i \in [m]$ and $k\geq 3$:
		\begin{enumerate}
			\item $\tilde{G}_i$ is the union of of edge-disjoint maximal cliques called \em{point-cliques}.
			\item The number of point-cliques is at least  $q^3/2c$ and at most $2q^3/c$.
			\item Every vertex $\ell \in L$ is a member in at least $q/2c$ and at most $2q/c$ point-cliques.
			\item For every $K_{k+1}$ in $\tilde{G}_i$, there exists a point-clique containing either exactly $k$ or exactly $k+1$ points of the clique. We call cliques of the former type $(k+1)$\textit{-fans} and from the latter degenerate.
			\item Every edge $e \in E(\tilde{G}_i)$ is contained in at most $2(2q/c)^k$ many $(k+1)$\textit{-fans}.
		\end{enumerate}
	\end{lemma}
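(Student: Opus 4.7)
The plan is to let $P_1,\dots,P_m$ be the color classes produced by Lemma \ref{lem:coloringpoints} applied to the parameter $c$ (so $m = c\lfloor q/2\rfloor$), and to define $\tilde{G}_i$ on the vertex set $L$ by joining $\ell,\ell'$ whenever $\ell\cap\ell' \in P_i$. The point-cliques of $\tilde{G}_i$ will be the sets $L_p := \{\ell\in L : p\in\ell\}$ for $p\in P_i$. Setting $n := |L|$, the bounds from Section \ref{sec:pencilunitals} give $q^4/2 \leq n \leq q^4$. Since two common secants of $\mathcal{P}$ meet in a unique point of $\PG(2,q^2)$, each edge of $\tilde{G}_i$ lies in exactly one $L_p$ and the $\tilde{G}_i$ are pairwise edge-disjoint; this yields (1). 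Properties (2) and (3) are then immediate: the number of point-cliques of $\tilde{G}_i$ equals $|P_i|$ and the number of point-cliques through a vertex $\ell$ equals $|\ell\cap P_i|$, so the corresponding bounds from Lemma \ref{lem:coloringpoints} transfer verbatim.

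For (4), any $K_{k+1}$ in $\tilde{G}_i$ consists of $k+1$ lines of $L$ whose pairwise intersections lie in $P_i \subseteq U_\lambda$ for some $\lambda\in\Lambda$; in particular, they are $k+1\geq 4$ secants of $U_\lambda$ pairwise intersecting inside $U_\lambda$. Lemma \ref{lem:unitalcombinatorics}(4) then produces a point $p\in U_\lambda$ on at least $k$ of these lines, and the pairwise intersections of these concurrent lines all equal $p$ and lie in $P_i$, forcing $p\in P_i$. Hence the point-clique $L_p$ contains either exactly $k$ of the vertices of the $K_{k+1}$ (a $(k+1)$-fan) or all $k+1$ (degenerate).

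Property (5) is the technical heart. Fix an edge $\{\ell,\ell'\}$ with $p=\ell\cap\ell' \in P_i$ and consider a $(k+1)$-fan $F$ through it, with central point $p^*\in P_i$ and extra line $v_0$ (the unique line of $F$ outside $L_{p^*}$). Either both $\ell,\ell'\in L_{p^*}$, forcing $p^*=p$ by uniqueness of intersections (Case A), or exactly one of them equals $v_0$, forcing $p^*\neq p$ (Case B). In Case A, the line $v_0 \in L \setminus L_p$ is determined by its intersection points with $\ell$ and $\ell'$, which respectively lie in $\ell\cap P_i\setminus\{p\}$ and $\ell'\cap P_i\setminus\{p\}$ (at most $(2q/c)^2$ choices), and the remaining $k-2$ central lines through $p$ correspond to a $(k-2)$-subset of $v_0\cap P_i\setminus\{v_0\cap\ell,v_0\cap\ell'\}$ (at most $\binom{2q/c-2}{k-2}$ choices). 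In Case B with $\ell$ central (the sub-case with $\ell'$ central is symmetric), $p^*$ ranges over $\ell\cap P_i\setminus\{p\}$ (at most $2q/c$ choices) and the $k-1$ further central lines through $p^*$ correspond to a $(k-1)$-subset of $\ell'\cap P_i\setminus\{p\}$ (at most $\binom{2q/c-1}{k-1}$ choices). Summing over the three sub-cases and using crude factorial bounds gives
\[
\Bigl(\tfrac{2q}{c}\Bigr)^{\!2}\binom{2q/c-2}{k-2} \;+\; 2\cdot\tfrac{2q}{c}\binom{2q/c-1}{k-1} \;\leq\; \Bigl(\tfrac{2q}{c}\Bigr)^{\!k}\Bigl(\tfrac{1}{(k-2)!}+\tfrac{2}{(k-1)!}\Bigr) \;\leq\; 2\Bigl(\tfrac{2q}{c}\Bigr)^{\!k},
\]
the last inequality using $k\geq 3$. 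The main obstacle is verifying that this case analysis is exhaustive and does not double-count, which rests on the uniqueness of the central point $p^*$ of a fan and on the fact that any two lines of $\PG(2,q^2)$ determine a unique intersection point.
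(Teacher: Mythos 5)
Your construction and counting coincide with the paper's: the graphs $\tilde{G}_i$ are exactly those defined from the coloring of Lemma \ref{lem:coloringpoints}, items (1)--(4) are read off from Lemma \ref{lem:unitalcombinatorics}(4) and the color-class bounds just as the paper intends, and your two-case count for (5) (fans concurrent at $\ell\cap\ell'$ versus fans with another concurrence point, on one of the two lines) is the same estimate $(|\ell\cap P_i|-1)(|\ell'\cap P_i|-1)\binom{2q/c}{k-2}+(|\ell\cap P_i|+|\ell'\cap P_i|-2)\binom{2q/c}{k-1}\leq 2(2q/c)^k$ given in the paper, just written out in slightly more detail. The proposal is correct and follows essentially the same route as the paper's proof.
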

	\begin{proof} The first four points are immediate from the previous discussion.  For the last point, consider an edge $e \in E(\tilde{G}_i).$ The edge $e$ corresponds to two secants $\ell_1,\ell_2\in L$ such that $\varnothing\neq\ell_1\cap\ell_2\subset P_i.$   There are two types of $(k+1)$\textit{-fans} in $\tilde{G}_i$ containing both $\ell_1,\ell_2$: Fans with $\ell_1\cap\ell_2$ as their concurrence point, of which there are at most $(|\ell_1|-1)(|\ell_2|-1)\binom{2q/c}{k-2}$; and fans with other concurrence point, of which there are at most $(|\ell_1|+|\ell_2|-2)\binom{2q/c}{k-1}$.
	\end{proof}
	\begin{remark}\label{Rem:MVConstant}
		Mubayi and the fourth author \cite{mubayi2024order} proved that for every $a\geq 128$ and sufficiently large $q$ so that $q \geq a \log q$, any graph on $n \in [q^4/2,q^4]$ vertices satisfying $(1)-(5)$ with $c=\lceil\frac{q}{a \log q}\rceil$ contains a $K_{k+1}$\textit{-free} spanning subgraph $H$ such that $\alpha_k(H)\leq 2^{40k+1}aq^2\log q$.
	\end{remark}
	
	\section{Proof of Main Theorems}\label{sec:graphs}
	We are now ready to prove \Cref{thm: MinDegGen,thm: MinDegConst}. The proof of \Cref{thm: MinDegConst} is now straightforward and should serve as a warm up for the more involved proof of \Cref{thm: MinDegGen}.
	
	%comment the old formulation of the theorem out.
	
	\iffalse
	\begin{reptheorem}{thm:main}
		Let $k \geq 3$ and $q$ a prime power such that $q \geq 2^{10}k\log q$. For all non-negative integers $s \leq q^2/(2^{12}k\log q)$ there exist edge-disjoint $K_{k+1}$-free graphs $G_1,\dots,G_s$ on the same vertex set $[N]$, $q^4/2 \leq N \leq q^4$, with the additional property that every subset of size at least $2^{100k}q^2\log q$ induces a copy of $K_k$ in each $G_i$.
	\end{reptheorem}
	\fi

	\begin{proof}[Proof of \Cref{thm: MinDegConst}]
		Fix some $k \geq 3$, set $C_k=2^{300k}$, and let $r$ be sufficiently large (so that $\sqrt{r}\geq 128\log r$ is satisfied.) By \Cref{obs: ColorPatter}, it suffices to find a $K_{k+1}$\textit{-free} color pattern $G_1,\dots,G_r$ on a vertex set of size $n\leq C_kr^2\log^2r$ such that $\alpha_k(G_i)<n/r$ for all $i \in [r]$. By Chebyshev's theorem, there exists a prime number $q$ satisfying \[\frac{1}{2}\Bigl(C_kr^2\log^2r\Bigr)^{1/4}\leq q \leq \Bigl( C_k r^2 \log^2r \Bigr)^{1/4}.\] Set $c=\lceil \frac{q}{128\log q}\rceil$ and note that 
		\[c\Bigl\lfloor\frac{q}{2}\Bigr \rfloor \geq \frac{q^2}{2^9 \log q} \geq \frac{2^4q^2}{\sqrt{C_k} \log r} \geq r.\]
		Therefore applying \Cref{lem: graphlemma} with the above value of $c$ gives us at least $r$ edge disjoint graphs $\tilde{G}_1,\dots,\tilde{G}_r$ on a shared vertex set of size $\frac{q^4}{2}\leq n\leq q^4=C_k(r \log r)^2$ satisfying $(1)-(5)$ of \Cref{lem: graphlemma} above. By \Cref{Rem:MVConstant}, we can find for each $i \in [r]$ a $K_{k+1}$\textit{-free} subgraph $G_i \subset \tilde{G}_i$ such that
		\[\alpha_{k}(G_i) \leq 2^{40k+8}q^2 \log q \leq 2^{40k+8}\sqrt{C_k} \:r\log r \Bigl(\log r +\log C_k \Bigr)  < \frac{n}{r}. \]
		The graphs $G_1,\dots,G_r$ certify the assertion of \Cref{thm: MinDegConst}.
	\end{proof}

	Note that the use of \Cref{Rem:MVConstant} necessitates the exponential dependency on $k$. To circumvent this by-product we alter our construction slightly; in particular, we modify the random sparsification used in \Cite{mubayi2024order} to prove \Cref{thm: MinDegGen}. The starting point of the proof is again \Cref{lem: graphlemma} and in particular item (4): Every $K_{k+1}$ in a graph $\tilde{G}_i$ is highly structured. For the following proof we set $C:= 2^{100}$.
	
	\begin{proof}[Proof of \Cref{thm: MinDegGen}]
		By \Cref{obs: ColorPatter} it suffices to find a $K_{k+1}$\textit{-free} color pattern $G_1,\dots,G_r$ on a vertex set $L$ of size $n\leq C^4k^2r^{2+\frac{30}{k}}\log^{20}r\log^{20}k$ such that $\alpha_k(G_i) < n/r$ for all $i \in [r]$. To that end, we first use Chebyshev's theorem to find a prime $q$ such that 
		\[\frac{C}{2}k^{\frac{1}{2}}r^{\frac{1}{2}+ \frac{15}{2k}}\log^{5} r \log^{5}k\leq q \leq Ck^{\frac{1}{2}}r^{\frac{1}{2}+ \frac{15}{2k}}\log^{5} r \log^{5}k.\]
		We quickly note the following implication which will be relevant later in the proof: \[\log q \leq \frac{1}{2} \log k + \frac{k+15}{2k} \log r + \mathcal{O}(\log\log r) \leq \log r \log k,\]
		for $r$ and $k$ sufficiently large.
		We choose $c= \lceil \frac{8r}{q}\rceil$ and note that indeed $c \leq \frac{q}{48 \log q}$, and the condition for Lemma \ref{lem:coloringpoints} is satisfied. Since $\lceil \frac{8r}{q}\rceil \lfloor \frac{q}{2}\rfloor \geq r$, applying \Cref{lem: graphlemma} gives us $r$ graphs $\tilde{G}_1,\dots,\tilde{G}_r$ on the vertex set $L$ where $q^4/2\leq |L|\leq q^4$. 
		Our job is done once we find inside every $\tilde{G}_i$ a $K_{k+1}\textit{-free}$ spanning subgraph $G_i$ satisfying $\alpha_k(G_i)<\frac{|L|}{r}$. Without loss of generality we fix the subscript $i$ to $1$ and probabilistically prove that such a subgraph $G_1 \subset \tilde{G}_1$ exists. We fix $\alpha:= r^{-\frac{15}{2k} }\log^{-4} r \log^{-4} k$ and carry out the following random procedure: We vertex partition each point-clique of $\tilde{G}_1$ independently into $k+1$ parts $R_0,\dots,R_k$ where every vertex  is independently placed in $R_j$ with probability $\frac{\alpha}{k}$ for $j \in [k]$ and in $R_0$ with probability $1-\alpha$. We then keep an edge $ab$ in $\tilde{G}_1$ if and only if there is some distinct $i,j \in [k]$ such that $a \in R_i$ and $b \in R_j$ in the partition of the unique point-clique containing $a$ and $b$. Each vertex is placed independently from other vertices and the partitions among the cliques are mutually independent. Let $\hat{G}$ be the probability space corresponding to our random procedure. We make the following two claims whose proofs will be slightly postponed:
		
		\begin{claim}\label{claim: ManyKk}
			\begin{equation}
				\mathbb{P}\Biggl( \exists A \in \binom{L}{|L|/r}: K_k \nsubseteq\hat{G}[A] \Biggr) < \frac{1}{2}.
			\end{equation}
		\end{claim}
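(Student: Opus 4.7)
The plan is to fix an arbitrary $A \subseteq L$ with $|A|=|L|/r$, bound $\mathbb{P}(K_k \nsubseteq \hat{G}[A])$ strongly enough to survive a union bound over the $\binom{|L|}{|L|/r}$ such sets $A$, and conclude. For each $p \in P_1$, write $C_p$ for its point-clique in $\tilde{G}_1$, $m_p := |C_p \cap A|$, and $R_0^{(p)}, \dots, R_k^{(p)}$ for the random partition of $C_p$ defined by the sparsification. The key observation is: if for some $p$ we have $R_j^{(p)} \cap C_p \cap A \neq \emptyset$ for every $j \in [k]$, then choosing one vertex from each of these $k$ intersections produces a copy of $K_k$ in $\hat{G}[A]$.

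Because the random partitions of distinct point-cliques are mutually independent, the complementary failure events at different $C_p$ are independent; a union bound within each $C_p$ then gives
\[
\mathbb{P}(K_k \nsubseteq \hat{G}[A]) \leq \prod_{p \in P_1} k\bigl(1-\tfrac{\alpha}{k}\bigr)^{m_p} \leq \exp\!\left(|P_1|\log k - \tfrac{\alpha}{k}\sum_{p \in P_1} m_p\right).
\]
By double counting, $\sum_{p \in P_1} m_p = \sum_{\ell \in A}|\ell \cap P_1| \geq |A|q/(2c)$ using \Cref{lem: graphlemma}(3), while \Cref{lem: graphlemma}(2) yields $|P_1| \leq 2q^3/c$.

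Substituting $|A| = |L|/r \geq q^4/(2r)$, $c = \lceil 8r/q \rceil$, $\alpha = r^{-15/(2k)}\log^{-4}r\log^{-4}k$, and the chosen $q \asymp C k^{1/2} r^{1/2+15/(2k)} \log^{5} r \log^{5} k$ with $C=2^{100}$, the gain $\tfrac{\alpha}{k}\sum_p m_p$ becomes of order $\alpha q^{6}/(r^{2}k)$ when $q \leq 8r$ and of order $\alpha q^{5}/(rk)$ when $q > 8r$. In both regimes this quantity dominates $|P_1|\log k$ by a polynomial-in-$r$ factor and exceeds the entropy term $\log\binom{|L|}{|L|/r} \leq (|L|/r)(1+\log r)$ by a factor of at least $C/2$; union-bounding over $A$ then gives $\mathbb{P}(\exists A : K_k \nsubseteq \hat{G}[A]) < 1/2$. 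The only real obstacle is this concluding arithmetic: the surplus $r^{15/(2k)}$ and the polylogarithmic slack baked into $q$ and $\alpha$ must uniformly beat the entropy across the whole range $k \leq r\log^2 r$, which forces both the careful exponent $15/(2k)$ and the enormous constant $C=2^{100}$ (the latter to absorb the regime $q > 8r$, where $\alpha$ itself is only polylogarithmic in $r$).
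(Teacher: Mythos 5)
Your proposal is correct and follows the same skeleton as the paper's proof: the within-point-clique transversal event ("every part $R_1,\dots,R_k$ meets $C_p\cap A$" forces a $K_k$ in $\hat{G}[A]$), independence of the partitions across point-cliques, the incidence double count via items (2)--(3) of \Cref{lem: graphlemma}, and a union bound over all $\binom{|L|}{|L|/r}$ sets $A$. The genuine difference is how the union-bound factor $k$ per trial is handled. The paper first discards point-cliques meeting $A$ in fewer than $t=q^2/(2^4r)$ vertices (the double count shows the $A$-proper cliques still carry a constant fraction of the incidences) and splits each remaining $A_K$ into blocks of size exactly $t$, chosen so that $\alpha t/k\geq 2\log k$ turns every single factor $k(1-\alpha/k)^{t}$ into $e^{-\alpha t/(2k)}<1$. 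You instead keep all point-cliques and absorb the aggregate factor $k^{|P_1|}\leq\exp\bigl((2q^3/c)\log k\bigr)$ into the gain $\frac{\alpha}{k}\sum_p m_p\geq\frac{\alpha q^5}{4kcr}$; this is legitimate (factors exceeding $1$ only weaken an upper bound, so no properness threshold is needed) and it closes numerically because $\alpha q^2\geq\frac{C^2}{4}kr^{1+15/(2k)}\log^6 r\log^6 k\gg kr\log k$, after which your comparison with the entropy term is exactly the paper's inequality \eqref{eq: techKk}, $\alpha q\geq 32kc\ln(er)$. Two small caveats: the domination of $|P_1|\log k$ is not by a ``polynomial-in-$r$'' factor in the whole range (for $k\gtrsim\log r$ one has $r^{15/(2k)}=O(1)$), but the $\log^6 r\log^5 k$ slack suffices anyway; and your displayed product should be read with the convention that per-clique factors may exceed $1$, which is exactly why your bound is slightly weaker than the paper's yet still amply sufficient. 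In short, your route buys a simpler argument (no $A$-proper cliques, no partition into blocks of size $t$) at the cost of a marginally lossier exponent that the generous choice of $q$ and $\alpha$ easily tolerates.
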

		\begin{claim}\label{claim: NoKk+1}
			\begin{equation}
				\mathbb{P} \Biggl( K_{k+1}  \subseteq \hat{G}\Biggr) < \frac{1}{2}. 
			\end{equation}
			
		\end{claim}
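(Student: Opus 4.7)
The plan is to classify the $K_{k+1}$ copies of $\tilde{G}_1$ by their interaction with the point-clique structure (\Cref{lem: graphlemma}(4)) and treat the two resulting types separately. Every $K_{k+1}$ in $\tilde{G}_1$ is either \emph{degenerate}, i.e.\ all $k+1$ vertices lie in a single point-clique, or a \emph{$(k+1)$-fan}, i.e.\ $k$ vertices $v_1,\ldots,v_k$ lie in a common point-clique $C$ and the remaining vertex $w$ is joined to each $v_i$ through a distinct other point-clique $C_i$; linearity forces the $k+1$ point-cliques $C,C_1,\ldots,C_k$ of a fan to be pairwise distinct. Degenerate copies are killed deterministically, since an edge survives only when its endpoints land in two different nonzero cells $R_1,\ldots,R_k$, so a surviving $K_{k+1}$ inside one point-clique would need $k+1$ vertices in only $k$ nonzero cells, which is impossible.

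For the fan case I would estimate the expected number of surviving fans and apply Markov. The mutual independence of the partitions of $C,C_1,\ldots,C_k$ lets the survival probability of a fixed fan factor as
\[P_{\mathrm{fan}}=\underbrace{k!(\alpha/k)^k}_{\text{base }K_k\text{ in }C}\;\cdot\;\prod_{i=1}^k\underbrace{\alpha^2\tfrac{k-1}{k}}_{wv_i\text{ in }C_i}=\frac{k!(k-1)^k}{k^{2k}}\,\alpha^{3k}\;\le\;\frac{\sqrt{k}\,\alpha^{3k}}{e^k}\]
by Stirling. Each fan is specified by its apex $w\in L$, its base point $p\in P_i$ with $p\notin w$, and the $k$-subset $\{v_1,\ldots,v_k\}\subset C_p$; the correspondence $v_i\leftrightarrow w\cap v_i$ puts this $k$-subset in bijection with a $k$-subset of $w\cap P_i$, so by \Cref{lem:coloringpoints}(2) there are at most $\binom{|w\cap P_i|}{k}\le\binom{2q/c}{k}$ fans with a given $(w,p)$. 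Summing,
\[|\mathrm{fans}|\;\le\;|L|\cdot|P_i|\cdot\binom{2q/c}{k}\;\lesssim\;\frac{2^{k+1}q^{k+7}}{c^{k+1}\,k!}.\]

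Substituting $c=\lceil 8r/q\rceil$, $q\le Ck^{1/2}r^{1/2+15/(2k)}\log^5 r\log^5 k$, and $\alpha=r^{-15/(2k)}\log^{-4}r\log^{-4}k$, and applying Stirling once more to $k^{k+4}/k!$, the expected number of surviving fans reduces to a product of a $k$-polynomial factor with the decaying factors $r^{-9/2+60/k}$ and $(\log r\log k)^{40-2k}$; the latter two swamp everything else once $k$ and $r$ are large enough, so a single Markov bound gives $\mathbb{P}(K_{k+1}\subseteq\hat{G})<1/2$. The delicate point is the balancing of parameters: $\alpha$ has to be small enough to make $P_{\mathrm{fan}}$ beat the fan count $\sim q^{2k+8}/r^{k+1}$, yet large enough to support the opposite inequality used in \Cref{claim: ManyKk}; the exponent $\tfrac{15}{2k}$ appearing in both $\alpha$ and $q$ is calibrated precisely for this trade-off, and verifying the numerics will be the main technical hurdle.
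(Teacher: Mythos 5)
Your proposal is correct and follows essentially the same route as the paper: the dichotomy of \Cref{lem: graphlemma}(4) into degenerate copies (killed deterministically by the partition of each point-clique) and $(k+1)$-fans, a count of fans of the form $|L|\cdot|P_1|\cdot\binom{2q/c}{k}$, a per-fan survival probability bounded via independence of the clique partitions (the paper simply uses $\alpha^{3k}$, where your exact $k!(k-1)^k k^{-2k}\alpha^{3k}$ is an unneeded refinement), and a first-moment bound. The only inaccuracy is in the final bookkeeping: after substitution the non-decaying factor is not polynomial in $k$ — it contains $C^{2k+8}=2^{200k+800}$ coming from $q^{2k+8}$ — but this is harmless because, exactly as in the paper's own verification, the factor $(\log r\log k)^{2k-40}$ absorbs it once $k$ and $r$ are sufficiently large.
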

		Given the above two claims, we choose for every $i \in [r]$, a $K_{k+1}$\textit{-free} spanning subgraph $G_i \subseteq \tilde{G}_i$ such that $\alpha_k(G_i)<\frac{|L|}{r}$. $G_1,\dots,G_r$ is then the desired color pattern. This proves \Cref{thm: MinDegGen} pending the proofs of the claims which we do next.

		\begin{proof}[Proof of Claim \ref{claim: ManyKk}]
			Fix some $A \in \binom{L}{|L|/r}$ and let $E_A$ be the event that $K_k \nsubseteq\hat{G}[A]$. Given a point-clique $K$, we say that $K$ is $A$\textit{-proper} if 
			$A_K:= A\cap K$ has size at least $\frac{q^2}{2^4r}=:t$. Furthermore, let us denote $A$\textit{-proper} point-cliques by $P_A$. By a simple double counting we get that:
			\begin{equation}
				\underset{K \in P_A}{\sum}|A_K| \geq \frac{|L|q}{2cr}- \frac{2q^3 t}{c}\geq \frac{|L|q}{4cr}.
			\end{equation}
			%\red{Is the 1/4 in the end correct?}
            %\textcolor{blue}{For this to follow we just need that $\frac{|L|q}{2cr} \geq 2\frac{2q^3t}{c}$ and since $|L|\geq \frac{q^4}{2}$ and $t=\frac{q^2}{2^4r}$ we should be good.} \red{Cool, probably I messed up the powers of 2.}
			For every $A$\textit{-proper} point-clique $K$, let us further vertex partition $A_K$ into $\lfloor \frac{|A_K|}{t}\rfloor=:s_K$ sets $K^1,\dots,K^{s_K}$ each of size $t$. We note that 
			\[E_A\subseteq\{K_k \nsubseteq \hat{G}[K^1],\dots,\hat{G}[K^{s_K}]\} \]
			and that the events $\{K_k\nsubseteq \hat{G}[K^i]\}_{i \in [s_K]}$ are mutually independent. Therefore, we have
			\begin{equation*}
				\mathbb{P}( E_A) = \underset{K \in P_A}{\prod}\underset{j \in [s_K]}{\prod}\mathbb{P}\Bigl(K_k\nsubseteq\hat{G}[K^j]\Bigr) \leq  \underset{K \in P_A}{\prod}\underset{j \in [s_K]}{\prod}k\Bigl(1-\frac{\alpha}{k}\Bigr)^t \leq \exp(-\underset{K \in P_A}\sum\frac{\alpha t}{2k} s_K) \leq \exp\Bigl(-\frac{\alpha }{4k}\underset{K \in P_A}\sum|A_K|\Bigr).
			\end{equation*}
			Where the first inequality follows by the union bound, the second by using $e^{-x}\geq 1-x$ for $x \in (0,1)$ and that $\frac{\alpha t}{k}\geq 2 \log k$, and the last by noting that $\lfloor \frac{|A_K|}{t}\rfloor \geq \frac{|A_K|}{2t}$.
			Using the union bound we can conclude:
			\begin{equation*}
				\mathbb{P}\Bigl( \underset{A \in \binom{L}{|L|/r}}{ \cup}E_A\Bigr)\leq \binom{|L|}{|L|/r} \exp \Bigl( -\frac{\alpha}{4k} \underset{K \in P_A}{\sum}|A_K| \Bigr) \leq \exp \Bigl( \frac{|L| \ln (er)}{r}-\frac{\alpha |L|q}{16kcr}\Bigr) < \frac{1}{2}.
			\end{equation*}
			The last inequality follows since 
            \begin{equation}\label{eq: techKk}
                \alpha q \geq 32kc\ln(er),
            \end{equation}

			which follows by our choice of $q$ and since $k < r \log^2 r$.
			%\textcolor{blue}{Please decide if this elaboration is needed:\\
            %\underline{Case 1: $c>1$} then $c= \lceil \frac{8r}{q}\rceil \leq \frac{16r}{q} $ and \Cref{eq: techKk} 
            % follows since $q^2\geq 2^9 kr \log^4k \log^4r \ln(er)$ by our choice of $q$. \\
            % \underline{Case 2: $c= 1$}. \Cref{eq: techKk} becomes $q\geq 2^5k \log^4k \log^4r\ln(er)$ which is true by our choice of $q$ and the inequality $k \leq r \log^2 r.$ 
             %}
             %\red{Is the $r^{\frac{15}{2k}}$ from $\alpha$ missing? Anyway, I had no issue with this estimation, so the clarification is not necessary as far as I am concerned.}
		\end{proof}
		\begin{proof}[Proof of Claim \ref{claim: NoKk+1}] Recall that there are only two possible types of $K_{k+1}$ in $\tilde{G}_1$: Degenerate $K_{k+1}$'s which get deleted by the Turánization of the point-cliques; and $K_{k+1}$'s that correspond to $(k+1)$\textit{-fans}. If $P_1$ is the set of point-cliques in $\tilde{G}_1$, then there are at most 
			\[|P_1|.|L| \binom{2q/c}{k} \leq q^7 \binom{2q/c}{k}\]
			$(k+1)$\textit{-fans} in $\tilde{G}_1$.
			Let $F$ be a $(k+1)$\textit{-fan} in $\tilde{G}_1$, and let $K_0,\dots,K_k$ be the point-cliques containing at least two vertices  of $F$, which we shall call relevant. Without loss of generality $|K_0\cap F|=k$  while $|K_i\cap F|=2$ for every $i\in [k]$. The probability that $\hat{G}[F]\cong K_{k+1}$ is at most $\alpha^{3k}$ since every vertex in $F$ must be thrown in the "active portion" of the partition of every relevant point-clique containing it. Therefore, using the union bound again we conclude that:
			\[\mathbb{P}\Bigl(K_{k+1} \subset \hat{G} \Bigr) \leq q^7 \binom{2q/c}{k}\Bigl(\frac{1}{r^{45/2k}\log^{12}k \log^{12}r}\Bigr)^k\leq \exp \Bigl(7\ln q-k\ln\log q -7.4 \ln r\Bigr)<\frac{1}{2}.\]
			for large enough $k$. 
            %\textcolor{blue}{Please decide if the following elaboration is needed.
            The second inequality follows since 
            \[\binom{2q/c}{k} \Bigl(\frac{1}{r^{\frac{45}{2k}}\log^{12}k\log^{12}r}\Bigr)^k \leq \Bigl(\frac{2qe}{ck} \Bigr)^k\Bigl(\frac{1}{r^{\frac{45}{2k}}\log^{12}k\log^{12}r}\Bigr)^k \leq \Bigl(\frac{1}{r^{\frac{15}{2k}}\log r \log k}\Bigr)^k \leq \Bigl(\frac{1}{r^{7.5/k} \log q}\Bigr)^k.\]
            %}
            %\red{This elaboration I do like. However, where is the $C^2$ coming from? Also how does the 7.5 change to a 7.4?
            %}
		\end{proof}

        This completes the proofs of Claims \ref{claim: ManyKk} and \ref{claim: NoKk+1} and hence the proof of \Cref{thm: MinDegGen}.
	\end{proof}

	\section{Lower Bound}\label{sec:lowerbounds}
	While the best (partial) upper bounds for the minimum degree of minimal Ramsey graphs are quadratic in both parameters (suppressing logarithmic factors), we do not currently have a matching lower bound. We know that $\Omega(rk^2)=\mathrm{ssat}_r(K_k)\leq s_r(K_{k})$ where the equality follows by a theorem of Damásdi et al.\ \cite{damasdi2021saturation} and the inequality by an observation of Tran \cite{tran2022two}. In this section we provide another lower bound quadratic in $r$.

	\begin{reptheorem}{thm: LowerKk+1}
		For all $k,r \geq 3$, $s_r(K_{k+1}) \geq \frac{kr^2}{16}$.
	\end{reptheorem}
	Let us first prove the following lemma 
	\begin{lemma}\label{lemma: ErdosRogersK}
		Let $k\geq 3$ and let $G$ be a $K_{k+1}$\textit{-free} graph on $n \in \mathbb{N}_{+}$ vertices. There exists a vertex subset $V' \subset V(G)$ such that $|V'| \geq \frac{1}{2}\sqrt{kn}$ and $G[V']$ is $K_{k}$\textit{-free}.
	\end{lemma}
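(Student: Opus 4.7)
The plan is to split into two cases based on the maximum degree $\Delta$ of $G$. In the case $\Delta \geq \tfrac{1}{2}\sqrt{kn}$, I take $V' := N(v)$ for a vertex $v$ realizing $\Delta$. Since $G$ is $K_{k+1}$-free, $G[N(v)]$ is $K_k$-free (a $K_k$ in $N(v)$ would extend to a $K_{k+1}$ via $v$), and $|V'| = \Delta \geq \tfrac{1}{2}\sqrt{kn}$ is exactly what we need.

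The case $\Delta < \tfrac{1}{2}\sqrt{kn}$ will call for a probabilistic deletion argument, whose key input is an upper bound on $\#K_k(G)$ obtained from the $K_{k+1}$-freeness of $G$. For each $v \in V(G)$ the induced subgraph $G[N(v)]$ is $K_k$-free, and by Zykov's theorem (which says that Tur\'an's graph maximizes the count of $(k-1)$-cliques among $K_k$-free graphs on $m$ vertices) the number of such cliques is at most $(d(v)/(k-1))^{k-1} \leq (\Delta/(k-1))^{k-1}$. Summing over $v$ and noting that each $K_k$ of $G$ is counted exactly $k$ times in this way yields $\#K_k(G) \leq (n/k)\bigl(\Delta/(k-1)\bigr)^{k-1}$.

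I will then sample each vertex of $V(G)$ independently with probability $p = (k-1)/\Delta$ to form a random subset $V_0$. By linearity of expectation $\mathbb{E}[|V_0|] = pn$ and
\[\mathbb{E}\bigl[\#K_k(G[V_0])\bigr] \leq p^k \cdot \#K_k(G) \leq \frac{pn}{k},\]
the last inequality holding precisely by the choice of $p$. Removing one vertex per $K_k$-copy in $G[V_0]$ then produces a $K_k$-free subset $V'$ of expected size at least $pn(1 - 1/k) = n(k-1)^2/(k\Delta)$. The hypothesis $\Delta < \tfrac{1}{2}\sqrt{kn}$ together with the inequality $(k-1)^2 \geq k^2/4$ (valid for $k \geq 2$, hence certainly for our $k \geq 3$) then gives $\mathbb{E}[|V'|] \geq \tfrac{1}{2}\sqrt{kn}$, so some realization meets the required size.

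The main delicate step is choosing the sampling probability $p$ optimally to balance the linear gain $pn$ against the polynomial deletion loss; using only the trivial bound $\binom{\Delta}{k-1}$ in place of Zykov's inequality costs an extra factor of $e$ and falls short of the constant $\tfrac{1}{2}$. A minor edge case is $\Delta < k-1$ (which forces $p > 1$), but in that regime $G$ contains no $K_k$ whatsoever, since every vertex of a $K_k$ has degree at least $k-1$ in the clique, so one simply sets $V' = V(G)$ provided $n \geq k/4$.
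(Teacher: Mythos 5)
Your proposal is correct and follows essentially the same route as the paper's proof: bound the number of $K_k$'s via $K_k$-freeness of neighborhoods and Zykov's theorem, sample with $p=(k-1)/\Delta$, delete one vertex per surviving clique, and play this off against the $K_k$-free neighborhood of a maximum-degree vertex (your explicit case split on $\Delta$ versus $\tfrac12\sqrt{kn}$ is just the paper's $\max\{d,kn/4d\}\geq\tfrac12\sqrt{kn}$ step). Your handling of the degenerate case $\Delta<k-1$ (and the implicit requirement $n\geq k/4$, harmless in the paper's application where $n\geq k^2$) is a minor point the paper glosses over.
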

	\begin{proof}
		If a $K_{k+1}$-free graph $G$ has maximum degree $d$, then 
		the neighborhoods of vertices are $K_k$-free. Zykov~\cite{Zykov} proved that the maximum number of copies of $K_{k - 1}$ in a $K_k$-free 
		graph with $d$ vertices is at most $D = (d/(k - 1))^{k - 1}$, with equality achieved by a balanced complete $(k - 1)$-partite graph. So the number of $K_k$ 
		in $G$ is at most $nD/k$. If we randomly sample vertices of the graph with probability $p = D^{-1/(k - 1)}$, then 
		the expected number of vertices remaining after we remove one vertex from each copy of $K_k$ is at least 
		\[ pn - p^k\frac{nD}{k} \geq \Bigl(1 - \frac{1}{k}\Bigr) \frac{n}{D^{1/(k - 1)}} \geq \frac{(k - 1)^2 n}{kd} \geq \frac{kn}{4d}.\] 
		We conclude that $G$ contains a $K_k$-free subgraph $H$ where
		\[ |V(H)| \geq \max\Bigl\{d,\frac{kn}{4d}\Bigr\} \geq \frac{1}{2}\sqrt{kn}.\]
	\end{proof}
	\begin{proof}[Proof of Theorem \ref{thm: LowerKk+1}]
		First, let us note the following recursion for all $r,k\geq 3$.  \begin{equation}\label{recursion}
			P_r(k) \geq P_{r-1}(k)+ \Bigg\lceil \frac{1}{2}\sqrt{kP_{r}(k)}\Bigg\rceil.
		\end{equation}
		
		Indeed, Let $G_1,\dots,G_r$ be an optimal $K_{k+1}$\textit{-free} color pattern on the vertex set $V$ such that every $[r]$\textit{-coloring} of $V$ contains a strongly monochromatic $K_k$. This means $|V|=P_r(k)$ and we can use Lemma \ref{lemma: ErdosRogersK} applied on the $K_{k+1}$\textit{-free} graph $G_r$ to find a subset $V'\subset V$ of size at least $\frac{1}{2}\sqrt{k P_r(k)}$ such that $G_r[V']$ is $K_{k}$\textit{-free}. Let us color $V'$ with the color $r$ and note that every extension of this coloring using colors from $[r-1]$ contains a strongly monochromatic $K_k$ in $V-V'$ in one of the colors $[r-1]$ and therefore, $|V-V'| \geq P_{r-1}(k)$ proving our claim. Next, we claim that for $k,r \geq 3$, we have $P_r(k) \geq \frac{kr^2}{16}$. Note that this claim is equivalent to the statement of \Cref{thm: LowerKk+1} by \Cref{lem: ColorPattern}. We proceed by induction on $r$. Fix some $k \geq 3$, for the base case of our induction we have $r=3$. However, $P_3(k) \geq P_2(k)=k^2\geq \frac{3^2k}{16}.$ Next, we employ the recursion in \ref{recursion} to have \begin{equation}
			P_r(k)-\frac{1}{2}\sqrt{kP_r(k)} \geq P_{r-1}(k) \geq \frac{k(r-1)^2}{16}, 
		\end{equation}
		where the last inequality follows by the induction hypothesis. Let $x:= \sqrt{P_r(k)}$, then we know that
		\[x^2-\frac{1}{2}\sqrt{k}x - \frac{k(r-1)^2}{16} \geq 0\]
		or equivalently 
		\[\Biggl(x-\frac{\sqrt{k}}{4} -\sqrt{\frac{k}{16}+\frac{k(r-1)^2}{16}}\Biggr) \Biggl(x-\frac{\sqrt{k}}{4} +\sqrt{\frac{k}{16}+\frac{k(r-1)^2}{16}} \Biggr)\geq 0.\]
		Then either $\sqrt{P_r(k)} =x\leq \frac{\sqrt{k}}{4}-\sqrt{\frac{k}{16}+\frac{k(r-1)^2}{16}}<0$
		which gives us a contradiction; or
		\[\sqrt{P_r(k)}=x \geq \frac{\sqrt{k}}{4} + \sqrt{\frac{k}{16}+\frac{k(r-1)^2}{16}} \geq \sqrt{\frac{kr^2}{16}}\]
		and we are done. 
		%\red{We might have to add another power of 2 in the denominator since $(r-1)^2 \geq r^2$ will not be true ($r$ could be huge and $k$ very small in the above).}
        %\textcolor{blue}{I still think it is correct. the difference between $r$ and $\sqrt{(r-1)^2}$ is approaching a constant as $r$ goes to infinity. This constant (multiplied by a k ) will be accounted for by the other terms in $k$. Here is a formal argument for it: Equivalently by squaring and dividing by $16$, we want to show that 
        %\[2k+ k(r-1)^2+2k\sqrt{1+(r-1)^2}\geq kr^2\] or
        %\[2+ (r-1)^2+2\sqrt{1+(r-1)^2}\geq r^2\] or equivalently 
        %\[r\leq 3/2+\sqrt{1+(r-1)^2}\] and this is obviously true.}
        %\red{Ok cool, I did not take the whole thing into account. I wonder if the average reader will, but we can leave it as is.}
	\end{proof}

	\section{Semisaturated Ramsey Numbers}\label{section: Tran}
	In this short section we prove Theorem~\ref{thm:semisaturation} about our upper bound on semisaturated Ramsey numbers. 
	\begin{proof} For our proof we must construct an edge $r$-coloring of $K_n$ with $n=4(k-1)^2r^2$ such that any extension of it to an $r$-edge coloring of $K_{n+1}$ creates a new monochromatic $K_{k+1}$. 

		By Chebyshev's theorem, we can find a prime $q$ such that $(k-1)r < q < 2(k-1)r$.         
        Consider the affine plane $(\cP,\cL)$ of order $q$ and denote its $q+1$ parallel classes of lines by $\cL_1,\dots,\cL_{q+1}$. 
        Define first an edge $(q+1)$-coloring of the complete graph $K_{q^2}$
        with vertex set $\cP$ as follows: 
   %     Define a graph $G \in \cG_{q+1}$ on the points of the affine plane as follows: 
   the edge between two vertices receives color $i$ if and only if the line they span on the affine plane is in $\cL_i$. One can observe that every color class is then the union of $q$ disjoint cliques of order $q$, and any two cliques of a different color meet in exactly one vertex. To create an $r$-coloring we can for example collapse the color classes between $r$ and $q+1$ into one.\\
	We need to show that adding a vertex and $r$-coloring the edges incident to it necessarily creates a new monochromatic $K_{k+1}$. There is certainly a color, say color $i \in [r]$, that occurs at least the average number, i.e. $q^2/r > (k-1)q$ times among the $q^2$ edges incident to the new vertex. Since there are $q$ pairwise disjoint monochromatic cliques in color $i$ covering the whole vertex set, one of these cliques must have at least $k$ vertices which are connected to the new vertex via an edge of color $i$. These vertices, together with the new vertex, form a new $K_{k+1}$ that is monochromatic in color $i$.    
 %   By passing to the largest color class in the neighborhood of the added vertex, it suffices to show that for all $i \in [q+1]$ every set of size at least $|\cP|/r$ induces a copy of $K_k$ in color $i$. This will prove the theorem,
%		as the number of vertices of $G$ is $q^2 < 4(k-1)^2r^2 < 4k^2r^2$ and one can easily recolor the graph $G$ to obtain a member $G' \in \cG_r$ by taking unions of color classes. Formally: for any surjective function $f:[q+1]\to[r]$, we can define a graph $G' \in \cG_r$ by defining $G_j' = \cup_i G_i$, where the index $i$ runs over all $i \in [q+1]$ such that $f(i) = j$, and $G_j'$ (resp.\ $G_i$) denotes the subgraph of $G'$ (resp.\ of $G$) induced by color $j$ (resp. $i$).   \\
%		
%		So fix a color $i \in [q+1]$ and consider a set of points $U$ of size at least $|\cP|/r = q^2/r > (k-1)q$. Since there are exactly $q$ lines in a parallel class, we find by pigeonhole principle that there must be a line $\ell \in \cL_i$ containing $k$ points of $U$. By definition of $G$, this corresponds to a copy of $K_k$ in color $i$.
	\end{proof}
	
	\section{Concluding remarks} \label{sec:remarks}
	
	$\bullet$ {\bf Asymptotics of $s_r(K_{k})$.} 
	Tantalizing problems remain open in all ranges of the parameters. 
	When $k$ and $r=r(k)$ both tend to infinity, the main question concerns correct exponents of $k$ and $r$. 
	If $k\leq r \log^2 r$, Theorems~\ref{thm: MinDegGen} and \ref{thm: LowerKk+1} give 
	$$ \frac{1}{16}r^2 k \leq s_r(K_{k})  \leq (rk)^{2+o(1)},$$
	so the exponent of $k$ is waiting to be settled.
	If $k > r \log^2 r$, then the bound of H\`an et al.\ ~\cite{han} gives 
	$$r k^2(1+o(1)) \leq s_r(K_{k})  \leq r^{3+o(1)} k^{2+o(1)},$$
	which leaves the exponent of $r$ up for grabs.
	
	For constant $k\geq 4$, Theorem~\ref{thm: MinDegConst} and the lower bound (\ref{eq:fox-constantk}) of Fox et al.\ ~\cite{fox2016minimum} gives
	$$ c_k r^2 \frac{\log r}{\log\log r} \leq s_r(K_{k})  \leq C_k r^2 \log^2 r,$$
	so the status of a factor of $\log r \log \log r$ remains unsettled. 
	For constant $r\geq 3$, the bounds 
	$$ c_r k^2 \leq s_r(K_{k})  \leq C_rk^2 \log^2 k.$$ of H\`an et al.\ \cite{han} are leaving us with the challenge whether how much of the factor $\log^2 k$ is necessary. 
	Making progress on any of these bounds would be very interesting. 
	We are especially eager to discover new avenues to obtain lower bounds for the problem. 
	
	For completeness we recall that for $r=2$ colors the exact value $s_2(k)=(k-1)^2$ was determined by Burr et al.\ ~\cite{burr1976graphs} and for $k=3$ the order of magnitude $s_r(3) = \Theta (r^2\log r)$ is known by Guo and Warnke~\cite{guo2020packing} and Fox et al.\ \cite{fox2016minimum}. 
    The determination of the constant factor in the latter problem is related to the analogous question for the Ramsey number $R(3,\ell)$, a notorious open problem.

	$\bullet$ {\bf Separation in the (semi)saturation problem.} 

By the definition of the $r$-color Ramsey number $R_r(K_{k+1})$, there exists an edge $r$-coloring of the complete graph on $R_r(K_{k+1})-1$ vertices, which does not contain any monochromatic $K_{k+1}$. Moreover, $R_r(K_{k+1}) -1$ is the largest number of vertices on which such an edge $r$-coloring exists and hence this coloring is in the family ${\cal RC}_r(K_{k+1})$.  
%	Let ${\cal RC}_r(K_k)$ be the set of $r$-colorings of some complete graph, such that any extension of it to an $r$-coloring of the edges of the complete graph with one more vertex creates a new monochromatic copy of $K_k$. The motivation behind this notion is that the classical $r$-color Ramsey number $R_r(K_k)$ is exactly $1$ plus the {\em largest} integer $n$ such that there exists an $r$-coloring of $E(K_n)$ in ${\cal RC}_r(K_k)$ without a monochromatic $K_k$. 
	Dam\'asdi et al.\ ~\cite{damasdi2021saturation} defined $\mathrm{sat}_r(K_{k+1})$ to be the {\em smallest} integer $n$ such that there exists an $r$-coloring of $E(K_n)$ in ${\cal RC}_r(K_{k+1})$ without any monochromatic $K_{k+1}$. Thus $\mathrm{sat}_r(K_{k+1}) < R_r(K_{k+1})$. 
	This concept was inspired by the definition of Erd\H os, Hajnal and Moon~\cite{erdos1964problem} of the saturation number $\mathrm{sat}(n,H)$ of graph $H$ opposite of its Tur\'an number $\mathrm{ex}(n,H)$. 
 
 Then trivially $\mathrm{ssat}_r(K_{k+1}) \leq \mathrm{sat}_r(K_{k+1})$, as in the definition of $\mathrm{ssat}_r(K_{k+1})$ we are allowed to consider {\em all} members of ${\cal RC}_r(K_{k+1})$, not only those without monochromatic $K_{k+1}$. We can also easily see that our main focus, the smallest minimum degree $s_r(K_{k+1}) = P_r(k)$ is sandwiched between these two saturation parameters. 
	
	\begin{proposition} \label{prop:ssat-P-sat}
		$\mathrm{ssat}_r(K_{k+1}) \leq P_r(k) \leq \mathrm{sat}_r(K_{k+1})$
	\end{proposition}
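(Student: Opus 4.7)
The plan is to prove both inequalities via the same key correspondence: if $v$ is a new vertex adjacent to every vertex of an existing edge-colored complete graph on $V$, then the coloring $\phi$ of $V$ defined by $\phi(w) := $ (color of $vw$) behaves like a vertex coloring of the underlying color pattern, and monochromatic $K_{k+1}$'s through $v$ correspond exactly to strongly monochromatic $K_k$'s in the pattern.

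For the first inequality $\mathrm{ssat}_r(K_{k+1}) \leq P_r(k)$, I would start with an optimal $K_{k+1}$-free color pattern $G_1,\dots,G_r$ on $N = P_r(k)$ vertices that guarantees a strongly monochromatic $K_k$ for every $r$-coloring of the vertices. I would extend this to an edge $r$-coloring of all of $K_N$ by coloring any edge not lying in some $G_i$ with an arbitrary color, and then verify that this coloring belongs to $\mathcal{RC}_r(K_{k+1})$. For any extension to $K_{N+1}$ with new vertex $v$, interpret the colors on $v$'s incident edges as a vertex coloring $\phi$ of $V$. By the defining property of $P_r(k)$, we obtain a strongly monochromatic $K_k$ in some color $i$ on vertices $w_1,\dots,w_k$, and then $\{v,w_1,\dots,w_k\}$ is a new monochromatic $K_{k+1}$ in color $i$ (the new edges $vw_j$ all have color $i$ by construction of $\phi$).

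For the second inequality $P_r(k) \leq \mathrm{sat}_r(K_{k+1})$, I would run the argument in reverse. Fix an optimal coloring realizing $\mathrm{sat}_r(K_{k+1})$, which by definition is both semisaturated and contains no monochromatic $K_{k+1}$. Let $G_i$ be the graph of edges of color $i$; this is a $K_{k+1}$-free color pattern on $N = \mathrm{sat}_r(K_{k+1})$ vertices. To verify the $P_r(k)$ property, take any vertex $r$-coloring $\phi$ of $V$, add a new vertex $v$, and color each edge $vw$ with $\phi(w)$. The saturation condition produces a new monochromatic $K_{k+1}$, which must use $v$ since the original coloring was $K_{k+1}$-free. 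The remaining $k$ vertices then form a $K_k$ in some color $i$ and all have $\phi$-color $i$, yielding the required strongly monochromatic $K_k$.

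There is no real obstacle here; the proof is a direct unwinding of the two definitions against each other, linked by the vertex-coloring/edge-coloring correspondence through a new apex vertex. The only mild care needed is to check that the ``new'' monochromatic $K_{k+1}$ produced by semisaturation really must use the apex $v$ in the second inequality, which follows automatically from the $K_{k+1}$-freeness of every color class in the starting $\mathrm{sat}$-optimal coloring.
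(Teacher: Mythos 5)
Your proposal is correct and follows essentially the same argument as the paper: both inequalities rest on the apex-vertex correspondence between vertex $r$-colorings of $[n]$ and edge-colored extensions to $K_{n+1}$, together with the observation that a new monochromatic $K_{k+1}$ through the apex corresponds to a strongly monochromatic $K_k$ in the pattern. The paper phrases the second inequality a bit more compactly as a containment of the two sets over which the minima are taken, but the substance is identical.
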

	\begin{proof}
		 For the second inequality, we connect the nomenclature of $P_r(k)$ to the color saturation parameter $\mathrm{sat}_r(K_{k+1})$. Firstly, an edge $r$-coloring of $K_n$ can be identified with an $r$-color pattern $G_1, \ldots , G_r$ on $[n]$, such that $\cup_{i=1}^r E(G_i) = E(K_n)$. Moreover, an edge $r$-coloring having the property that every extension of it to $n+1$ vertices creates a new monochromatic $K_{k+1}$ is equivalent to the corresponding $r$-color pattern $G_1, \ldots , G_r$ having the property that every $r$-coloring of $[n]$ contains a strongly monochromatic $K_k$. Indeed, extensions of an edge $r$-coloring of $K_n$ to the edges incident to the new vertex $(n+1)$ are in one-to-one correspondence with the vertex $r$-colorings of $[n]$ (namely, the color of an extension edge is just the color of the endpoint of that edge in $[n]$ in the vertex coloring). Then having a ''new'' monochromatic $K_{k+1}$ in the extension is equivalent to having a strongly monochromatic $K_k$ in the vertex coloring. 
		The second inequality of the proposition then follows because for the definition of $P_r(k)$ we do {\em not} require the $r$-color pattern to partition the whole edge set of the clique, while for $\mathrm{sat}_r(K_{k+1})$ we do, so the minimum $n$ for the latter is taken over a subset of the set we take the minimum of for the former. 
        
        For the first inequality, which has already been observed by Tran~\cite{tran2022two}, one can take an $r$-color pattern $G_1, \ldots , G_r$ on the optimal number $n=P_r(k)$ of vertices and color arbitrarily the uncolored edges in $E(K_n) \setminus \cup E(G_i)$ by $r$ colors. This provides an $r$-coloring of $E(K_n)$ that is in ${\cal RC}_r(K_{k+1})$. 
     \end{proof}

It is natural to wonder how tight the two inequalities of the previous proposition are in the various ranges of the parameters. 

For $r=2$ for example it is known \cite{damasdi2021saturation} that all three functions are equal to $k^2$. 
    %both inequalities of the previous proposition are equalities.
On the other hand for any constant $k\geq 2$ our Theorem~\ref{thm:semisaturation} and the lower bounds of \cite{fox2016minimum} do separate the order of magnitude of $\mathrm{ssat}_r(K_{k+1})$ and $P_r(k)$ as $r$ tends to infinity. 

We believe that this to be true for any other ranges of the parameters.

\begin{conjecture} \label{con:ssat-P} For any $r=r(k) \geq 3$, as $k$ tends to infinity we have 
		$\mathrm{ssat}_r(K_{k+1})\ll P_r(k)$. 
        \end{conjecture}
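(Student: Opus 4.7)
The plan is to prove $\mathrm{ssat}_r(K_{k+1}) \ll P_r(k)$ by sharpening the lower bound on $P_r(k)$, exploiting the fact that in a color pattern every color class must be $K_{k+1}$-free, a restriction absent from the semi-saturation problem. The construction of Theorem~\ref{thm:semisaturation} crucially produces color classes containing cliques $K_q$ of order much larger than $K_{k+1}$, so the affine-plane template cannot work for $P_r(k)$. This suggests that the $K_{k+1}$-free constraint is the genuine bottleneck behind $P_r(k)$ and one should quantify this gap.

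First I would revisit the recursive Erd\H os--Rogers argument of Theorem~\ref{thm: LowerKk+1}. There, Lemma~\ref{lemma: ErdosRogersK} gives a $K_k$-free induced subgraph of size $\tfrac{1}{2}\sqrt{kn}$ inside any $K_{k+1}$-free graph on $n$ vertices, and the resulting recursion yields only $P_r(k)=\Omega(kr^2)$. The conjecture requires, in the regime of fixed $r\geq 3$, the bound $P_r(k)=\omega(k^2)$, and in the regime of growing $r=r(k)$, the bound $P_r(k)=\omega(k^2r^2)$. A natural target is an Erd\H os--Rogers-type estimate merging the $k$-dependence of Lemma~\ref{lemma: ErdosRogersK} with the $\tilde\Omega(\sqrt n)$ lower bound of Dudek and Mubayi, ideally $f_{k,k+1}(n)=\omega(k\sqrt n)$, say of order $k\sqrt{n\log n}$. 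Plugging such a bound into the recursion of Theorem~\ref{thm: LowerKk+1} would yield $P_r(k)=\omega(k^2r^2)$ and settle the conjecture for all $r\geq 3$.

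Since strengthening $f_{k,k+1}$ is itself a notorious open problem, a complementary line of attack is to bypass Erd\H os--Rogers entirely and use the joint structure of the $r$ color classes. The key extra information is that the $G_i$ are pairwise edge-disjoint and each is $K_{k+1}$-free. By Zykov's theorem, the number of copies of $K_k$ in each $G_i$ is at most $(n/k)^k$; on the other hand, the condition defining $P_r(k)$ forces every subset of size $n/r$ to contain a $K_k$ in some $G_i$, and a supersaturation argument promotes this to many copies. Double-counting copies of $K_k$ across colors, together with edge-disjointness, should produce a lower bound strictly better than $\Omega(k^2r^2)$. A dependent-random-choice style selection of a vertex participating in atypically many $K_k$'s, combined with an iterative deletion of color classes as in the proof of Theorem~\ref{thm: LowerKk+1}, appears to be the most promising way to make this quantitative.

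The main obstacle is precisely that the argument must strictly go beyond semi-saturation: the affine-plane construction comes within a subpolynomial factor of the upper bounds on $P_r(k)$ in Theorems~\ref{thm: MinDegGen} and~\ref{thm: MinDegConst}, so any naive averaging will fail to separate the two parameters. Either a genuine $k$-dependent improvement of $f_{k,k+1}$ or a new argument reasoning about all $r$ color classes at once will be required. The latter seems to me the more tractable route, but executing it will demand a delicate interplay between the supersaturation of $K_k$'s inside $(n/r)$-subsets and the Zykov-type cap on the number of $K_k$'s in each $K_{k+1}$-free color class, while crucially avoiding the loss inherent in looking at a single color at a time.
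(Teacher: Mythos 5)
The statement you were asked to prove is a \emph{conjecture}: the paper explicitly leaves \Cref{con:ssat-P} open, and even observes that for any fixed $r\geq 3$ it is implied by (and essentially equivalent to) the still-open conjecture of H\`an, R\"odl and Szab\'o that $s_3(K_{k+1})\gg k^2$. There is therefore no ``paper's proof'' to compare against, and your submission is, as you yourself concede, a research plan rather than a proof. Evaluated as an argument it has a genuine gap in both of its branches.

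Your first route hinges on an Erd\H{o}s--Rogers lower bound of the form $f_{k,k+1}(n)=\omega(k\sqrt n)$, which you acknowledge is a notorious open problem. Beyond that, the heuristic you offer for why such a bound should be plausible does not support the strength you actually need. Merging Lemma~\ref{lemma: ErdosRogersK} (which gives $\Omega(\sqrt{kn})$) with the Shearer/Dudek--Mubayi bound $\Omega(\sqrt{n\log n}/\log\log n)$ would, at best, give $f_{k,k+1}(n)=\Omega\bigl(\sqrt{kn\log n}/\log\log n\bigr)$ --- the $k$-dependence is $\sqrt k$, not $k$. Feeding $\sqrt{kn\log n}/\log\log n$ into the recursion $P_r(k)\geq P_{r-1}(k)+f_{k,k+1}\bigl(P_r(k)\bigr)$ yields only $P_r(k)=\widetilde\Omega(kr^2)$, a polylogarithmic improvement over \Cref{thm: LowerKk+1} but far short of the $\omega(k^2r^2)$ you would need to dominate the known upper bound $\mathrm{ssat}_r(K_{k+1})\leq 4k^2r^2$ of \Cref{thm:semisaturation} in all regimes. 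Your stated ``ideal'' $f_{k,k+1}(n)\gtrsim k\sqrt{n\log n}$ is therefore not the ``natural merge'' but a substantial further strengthening, for which you give no mechanism.

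Your second route --- double-counting copies of $K_k$ across the $r$ edge-disjoint color classes, combining a supersaturation lower bound inside every set of size $n/r$ with a Zykov upper bound per color class --- is the more interesting suggestion, since it actually uses the edge-disjointness and the $K_{k+1}$-freeness jointly, which is exactly what semisaturation cannot exploit. But as written it contains no lemma, no inequality, and no computation; it is a direction, not a step. In particular you would have to show that the loss from passing to the largest color class (which is where the factor $r$ enters both $P_r(k)$ and $\mathrm{ssat}_r$) is strictly worse under the $K_{k+1}$-free constraint, and you offer no quantitative handle on that. One further avenue you do not mention is to attack $\mathrm{ssat}_r(K_{k+1})$ from above rather than $P_r(k)$ from below: the current bounds leave a gap between $\Omega(k^2r)$ and $O(k^2r^2)$, and an improved upper bound there would lower the bar for the separation, e.g.\ if $\mathrm{ssat}_r(K_{k+1})=O(k^2r)$ then the existing lower bound $P_r(k)=\Omega(kr^2)$ already gives the conjecture whenever $r\gg k$. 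In short: the conjecture remains open, and your proposal, while pointing in sensible directions, does not constitute a proof.
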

    
   	More modestly, it would even be interesting to decide whether there is significant separation between $\mathrm{ssat}_r(K_{k+1})$ and $\mathrm{sat}_r(K_{k+1})$.  
 
For $r=3$ H\`an, R\"odl and Szab\'o~\cite[Conjecture 2]{han}  conjectured an asymptotic separation between the three- and two-color case of the smallest minimum degree parameter. This is plausible, yet we do not even know whether there is {\em any} $r$ for which 
$s_r(K_{k+1}) = P_r(k) \gg s_2(K_{k+1}) = k^2$ as $k \to \infty$. 
Since $P_r(k)$ is non-decreasing in $r$ and by Theorem~\ref{thm:semisaturation} the order of $\mathrm{ssat}_r(K_{k+1})$ is quadratic in $k$ for every fixed $r$, the conjecture of H\`an et al.\ would also imply Conjecture~\ref{con:ssat-P} for any fixed $r\geq 3$.

    Concerning the separation in the second inequality of Proposition~\ref{prop:ssat-P-sat} we are less convinced. For the case of $k=2$ we tend to agree with authors of \cite{guo2020packing} who believe that their construction could be improved so that all (and not only a $(1-\varepsilon)$ proportion) of the edges of the complete graph are covered by some $K_3$-free $r$-color pattern on $\Theta ( r^2\log r)$ vertices. 

\begin{conjecture}
    $P_r(2) = \Theta (\mathrm{sat}_r(K_{3})).$ 
    \end{conjecture}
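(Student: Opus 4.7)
The conjecture asks for $P_r(2) = \Theta(\mathrm{sat}_r(K_3))$. One direction, $P_r(2) \leq \mathrm{sat}_r(K_3)$, is already established in Proposition~\ref{prop:ssat-P-sat}, and the results of Guo--Warnke and Fox et al.\ give $P_r(2) = s_r(K_3) = \Theta(r^2 \log r)$. Thus the conjecture reduces to proving the upper bound $\mathrm{sat}_r(K_3) = O(r^2 \log r)$: one must exhibit, on a vertex set of size $n = O(r^2 \log r)$, an edge $r$-coloring of the \emph{entire} $K_n$ such that (i) every color class is triangle-free, and (ii) every vertex $r$-coloring of $[n]$ yields a strongly monochromatic edge.

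My plan would be to begin with the Guo--Warnke construction, which produces a triangle-free $r$-color pattern $G_1, \dots, G_r$ on $n = \Theta(r^2 \log r)$ vertices covering a $(1-\varepsilon)$-fraction of the edges of $K_n$ and satisfying the strongly-monochromatic-edge property. Write $H$ for the (sparse) graph of uncovered edges. The task becomes to extend the coloring by assigning to each edge of $H$ a color from $[r]$ so that no monochromatic triangle appears. This is a list-edge-coloring problem on $H$: for an uncovered edge $uv$, the \emph{forbidden list} $F(uv) \subseteq [r]$ is the set of colors $c$ such that $uw, vw \in E(G_c)$ for some common neighbor $w$. If $|F(uv)| < r$ on every edge of $H$, then a greedy argument---tuned if necessary by the Lov\'asz Local Lemma or by a nibble/semi-random iteration---should complete the coloring.

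The main obstacle will be to control the sizes $|F(uv)|$, which requires exploiting the pseudorandom codegree properties of the Guo--Warnke graphs. A direct second-moment computation should show that a typical pair $(u,v)$ lies in only $\mathrm{polylog}(r)$ monochromatic cherries, so the forbidden list has size $o(r)$ on most edges; the genuinely hard case is to handle outlier pairs for which $|F(uv)|$ is close to $r$. One natural remedy is to reserve a small palette of correction colors up front, or to augment the vertex set by a sublinear absorbing gadget designed to re-route problematic uncovered edges in a triangle-free way. Should these local-repair strategies fail, a fresh construction---perhaps combining the Hermitian-unital framework of Sections~\ref{sec:pencilunitals}--\ref{Coloring II: Probability} with a Turán-type sparsification engineered to be edge-covering from the start---may be more amenable, trading the iterative Guo--Warnke covering for a more symmetric object that admits a complete extension by design.
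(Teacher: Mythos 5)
This statement is a \emph{conjecture} in the paper's concluding remarks; the paper offers no proof of it, only the remark that the authors of the Guo--Warnke paper believe their construction could be improved so that \emph{all} edges of the complete graph are covered by a triangle-free $r$-color pattern on $\Theta(r^2\log r)$ vertices. Your reduction of the problem is correct: since $P_r(2)\leq \mathrm{sat}_r(K_3)$ by Proposition~\ref{prop:ssat-P-sat} and $P_r(2)=s_r(K_3)=\Theta(r^2\log r)$, the conjecture is exactly the statement $\mathrm{sat}_r(K_3)=O(r^2\log r)$. But what you submit after that reduction is a research plan, not a proof, and its central step is unproven and quite possibly false as stated.

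Concretely, the claim that ``a typical pair $(u,v)$ lies in only $\mathrm{polylog}(r)$ monochromatic cherries, so the forbidden list has size $o(r)$ on most edges'' does not follow from any computation you give, and a back-of-the-envelope estimate points the other way. In a packing on $n=\Theta(r^2\log r)$ vertices covering almost all of $K_n$, the average degree of a color class is about $n/r\approx r\log r$, so the codegree of a typical pair inside a \emph{single} color class is of order $d^2/n\approx\log r$; summed over the $r$ colors this gives $\Theta(r\log r)$ cherries through a typical pair, not $\mathrm{polylog}(r)$. If per-color codegrees concentrate around mean $\log r$, the expected number of colors that are \emph{cherry-free} at a given uncovered pair is only about $r\cdot e^{-\Theta(\log r)}=O(1)$, i.e.\ the forbidden list is typically of size $r-O(1)$, which is precisely why completing the Guo--Warnke packing to an exact covering is the open difficulty rather than a routine greedy or Local Lemma extension. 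Your plan also only forbids triangles consisting of one new edge and two old edges; monochromatic triangles formed by two or three of the newly colored edges of $H$ must be excluded as well, which your list definition $F(uv)$ does not capture. The fallback suggestions (reserved correction colors, absorbing gadgets, or a new unital-based covering construction) are reasonable directions but are stated without any argument, so the proposal does not close the gap between $s_r(K_3)$ and $\mathrm{sat}_r(K_3)$; the conjecture remains open.
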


	$\bullet$ {\bf Monotonicity of $s_r(K_k)$ in $k$.} Finally, we would like to reiterate the humbling conjecture of Fox et al.~\cite[Conjecture 5.1]{fox2016minimum} stating that $s_r(K_{k+1}) \geq s_r(K_k)$ for every $r\geq 2$ and $k\geq 2$. Recall that from the identity $s_r(K_{k+1})=P_r(k)$, it is not difficult to see that $s_r(K_{k+1})$ is non-decreasing in $r$.

	%$\bullet$ {\bf Lower bounds on the Erd\H{o}s-Rogers function.} Given two positive integers $k$ and $n$, the Erd\H{o}s-Rogers function $f_k(n)$ is the largest integer $m$ such that every $K_{k+1}$\textit{-free} graph on $n$ vertices contains a $K_k$\textit{-free} vertex subset of size $m$. Lower bounds on $f_k(n)$ lead to lower bounds on $P_r(k)$ through the following recursion :
	%\[P_{r}(k) \geq P_{r-1}(k)+ f_k(P_r(k)).\]
	%Indeed, this approach was used in \Cref{sec:lowerbounds}. It is of interest then to provide lower bounds on the Erd\H{o}s-Rogers function $f_k(n)$. The best known lower bound is due to Dudek and Mubayi and reads as follows: 
	%\[f_k(n)= \Omega\Bigl(\frac{\sqrt{n\log n} }{\log \log n}\Bigr).\]
	%The proof proceeds as follows: A $K_{k+1}$ graph $G$ with maximum degree $\Delta$ contains a $K_k$\textit{-free} subset of size $\Delta$ and by a result of Shearer ~\cite{shearer1995independence} an independent set of size $\Omega \bigl( \frac{n\log \Delta}{ \Delta \log \log \Delta }\bigr)$. This $1/\log \log$ factor in the lower bound of $f_k(n)$ is inherited from Shearer's result. Shearer's result however gives a large independent set, a much stronger condition than $K_k$\textit{-freeness}. Is it possible to provide better lower bounds on the Erd\H{o}s-Rogers function by appropriating Shearer's Lemma or abandoning it all together? \textcolor{blue}{I just realized that this has been asked/mentioned in the Mubayi-Verstraete paper.}\\

	\printbibliography

\end{document}